\theoremstyle{plain}
\newtheorem{theorem}{Theorem}%[section]
\newtheorem{lemma}[theorem]{Lemma}
\newtheorem{corollary}[theorem]{Corollary}
\newtheorem{proposition}[theorem]{Proposition}
\newtheorem{conjecture}[theorem]{Conjecture}
\theoremstyle{remark}
\newtheorem{remark}{Remark}%[section]
\newcommand{\Rmnum}[1]{\expandafter\@slowromancap\romannumeral #1@}
\def\ri{\mathrm i}
\def\rd{\,\mathrm d}
\def\rb{\mathbb R}
\def\nb{\mathbb N}
\def\zb{\mathbb Z}
\def\cb{{\mathbb C}}
\numberwithin{equation}{section}
\author{Michael J.\ Schlosser}
\address{Fakult\"at f\"ur Mathematik, Universit\"at Wien,
Oskar-Morgenstern-Platz~1, A-1090 Vienna, Austria}
\email{michael.schlosser@univie.ac.at}
\thanks{The first author was partially supported by FWF Austrian Science Fund
grant P 32305.}
\author{Nian Hong Zhou}
\address{School of Mathematics and Statistics,
Guangxi Normal University,
No.1 Yanzhong Road, Yanshan District, Guilin, 541006,
Guangxi, PR China}
\email{nianhongzhou@outlook.com, nianhongzhou@gxnu.edu.cn}
\thanks{The second author was partially supported by Guangxi Science and Technology Plan Project \#2020AC19236.}
\title[On the infinite Borwein product]{On the infinite Borwein product\\
raised to a positive real power}
\subjclass[2010]{Primary 11P55; Secondary 11F03, 11F30, 26D20}
\keywords{infinite Borwein product, sign pattern, asymptotics, positivity,
circle method, vanishing of coefficients.}
\dedicatory{Dedicated to the memory of Richard Allen Askey}
\begin{document}

\begin{abstract}
In this paper, we study properties of the coefficients appearing in the
$q$-series expansion of $\prod_{n\ge 1}[(1-q^n)/(1-q^{pn})]^\delta$,
the infinite Borwein product for an arbitrary prime $p$, raised
to an arbitrary positive real power $\delta$.
We use the Hardy--Ramanujan--Rademacher circle method
to give an asymptotic formula for the coefficients. For $p=3$ we give
an estimate of their growth which enables us to partially confirm
an earlier conjecture of the first author concerning an observed
sign pattern of the coefficients when the exponent $\delta$ is within
a specified range of positive real numbers.
We further establish some vanishing and divisibility properties of the
coefficients of the cube of the infinite Borwein product.
We conclude with an Appendix presenting several new conjectures on precise
sign patterns of infinite products raised to a real power which are similar
to the conjecture we made in the $p=3$ case.

\end{abstract}

\maketitle
%\newpage
%\setcounter{tocdepth}{1}
%\tableofcontents
%\addtocounter{section}{-1}
%\newpage

\section{Introduction and statement of results}

Let $q$ be a complex number with $0<|q|<1$. Define
\begin{equation}\label{eq:f}
f(q)=\prod_{n\ge 1}\frac{1}{1-q^n},
\end{equation}
and, for $p$ being a prime,
\begin{equation}\label{eq:Gp}
  G_p(q)=\frac{f(q^p)}{f(q)}.
\end{equation}
We shall call $G_p(q)$ the \emph{infinite Borwein product}.
It is well known that $f(q)$ is the generating function for the number of
unrestricted partitions $p(n)$, that is
$$
f(q)=\sum_{n\ge 0}p(n)q^n.
$$
Using the modularity of $f(q)$, Hardy and Ramanujan \cite{MR1575586}
and Rademacher \cite{MR8618} proved that
\begin{equation}\label{part-asym}
  p(n)=\frac{1}{\pi\sqrt{2}}
  \sum_{k\ge 0}k^{1/2}\sum_{\substack{h\!\!\pmod{k}\\
      \gcd(h,k)=1}}\omega_{h,k}e^{-\frac{2\pi\ri hn}{k}}
  \frac{\,d}{\,dn}\frac{\sinh\Big(\frac{\pi}{k}\sqrt{\frac{2}{3}(n-1/24)}\Big)}
  {\sqrt{n-1/24}},
\end{equation}
for all integers $n\ge 1$. Here and throughout this paper,
\begin{equation}\label{om}
  \omega_{h,k}=e^{\pi\ri s(h,k)},
\end{equation}
  with $s(h,k)$ being the Dedekind sum
\begin{equation}\label{Dedsum}
  s(h,k)=\sum_{1\le j<k}\bigg(\frac{j}{k}-
    \left\lfloor\frac{j}{k}\right\rfloor-
    \frac{1}{2}\bigg)\bigg(\frac{jh}{k}-
    \left\lfloor\frac{jh}{k}\right\rfloor-\frac{1}{2}\bigg).
\end{equation}

According to Andrews \cite{MR1395410}, P.~Borwein considered the
$q$-series expansion
\begin{equation}\label{eqib}
G_{p}(q)=\sum_{n\ge 0}c_{p}(n)q^n,
\end{equation}
as part of an unpublished study of modular forms.
While it is clear from \eqref{eq:Gp} that $G_p(q)^{-1}$ is the
generating function for partitions into parts that are not a multiple of $p$,
and thus has non-negative coefficients,
the coefficients $c_p(n)$ in \eqref{eqib} have different signs.
Andrews~\cite[Theorem~2.1]{MR1395410} proved the following
result, and noted that Garvan and Borwein have a different proof in
unpublished work of 1990.
\begin{theorem}\label{thm:a} For all primes $p$, $c_p(n)$ and $c_p(n+p)$
have the same sign for each $n\ge 0$, i.e.,
$$c_{p}(n)c_{p}(n+p)\ge 0,$$
for each $n\ge 0$.
\end{theorem}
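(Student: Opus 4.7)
The plan is to establish a decomposition of the form
\[
G_p(q) = \sum_{r=0}^{p-1} \epsilon_r\, q^r B_r(q^p), \qquad \epsilon_r \in \{-1, 0, +1\},\ B_r(q) \in \zb_{\ge 0}[[q]],
\]
which would imply $c_p(pk+r) = \epsilon_r \cdot [q^k] B_r(q)$ for every $k \ge 0$ and $r \in \{0, 1, \dots, p-1\}$, and hence $c_p(n) c_p(n+p) \ge 0$. The starting point is Euler's pentagonal number theorem together with the defining identity for $G_p$:
\[
G_p(q)\,(q^p;q^p)_\infty = (q;q)_\infty = \sum_{k \in \zb}(-1)^k q^{k(3k-1)/2}.
\]
Since $(q^p;q^p)_\infty$ is a power series in $q^p$, multiplication by it preserves exponents modulo $p$, so extracting the $\equiv r \pmod p$ part of both sides yields
\[
\epsilon_r\, q^r B_r(q^p)\,(q^p;q^p)_\infty = \sum_{k \in N_r}(-1)^k q^{k(3k-1)/2},
\]
where $N_r := \{k \in \zb : k(3k-1)/2 \equiv r \pmod p\}$.

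Next I would analyse $N_r$ using the identity $(6k-1)^2 = 24 \cdot \tfrac{k(3k-1)}{2} + 1$, so that $k \in N_r$ iff $(6k-1)^2 \equiv 24r+1 \pmod p$. When $24r+1$ is a quadratic non-residue modulo $p$, $N_r$ is empty and $c_p(pk+r) = 0$ for every $k$ --- for instance this forces $c_5(5k+3)$ and $c_5(5k+4)$ to vanish. Otherwise $N_r$ is a union of one or two cosets of $p\zb$, and parametrising each coset as $k = pm + k_0$ expresses its contribution as a quadratic-exponent sum in $m$ to which Jacobi's triple product identity (with base $q^{p^2}$) applies, giving a theta product of the form $\pm q^c (q^a;q^{p^2})_\infty (q^{p^2-a};q^{p^2})_\infty (q^{p^2};q^{p^2})_\infty$. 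After dividing by $(q^p;q^p)_\infty$ and cancelling these theta factors against the refinement $(q;q)_\infty = \prod_{s=1}^{p^2}(q^s;q^{p^2})_\infty$, each $B_r(q)$ emerges (up to the sign $\epsilon_r$) as a sum of one or two reciprocals of finite products $\prod_{s \in S_r}(q^s;q^{p^2})_\infty$, which are generating functions for partitions with parts in prescribed residue classes modulo $p^2$ and hence have non-negative coefficients.

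The principal obstacle is the case where $N_r$ splits into two cosets of $p\zb$, which occurs for some $r$ whenever $p \ge 5$: Jacobi's identity then furnishes two theta contributions, and one must verify that, after division by $(q^p;q^p)_\infty$, the two resulting non-negative pieces enter $B_r(q)$ with \emph{the same} global sign $\epsilon_r$, so that their sum remains of definite sign. Because the sign $(-1)^k$ depends on the parity of $k$ rather than on $k \bmod p$, this bookkeeping hinges on the parities of the two coset representatives and is the most delicate step of the argument. For $p = 3$ the difficulty disappears entirely, since the congruence $k(3k-1)/2 \equiv k \pmod 3$ makes each $N_r$ a single coset and Jacobi's identity produces a single theta product --- recovering the classical $(\epsilon_0, \epsilon_1, \epsilon_2) = (+1, -1, -1)$ sign pattern conjectured by Borwein.
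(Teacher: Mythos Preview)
First, a point of context: the paper does not give its own proof of this theorem. Theorem~\ref{thm:a} is quoted as a result of Andrews~\cite[Theorem~2.1]{MR1395410}, so there is no argument in the paper to compare against. I can therefore only assess your proposal on its own merits.

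Your plan is sound up to the point you yourself flag as ``the principal obstacle,'' but that obstacle is not merely delicate bookkeeping: the sign agreement you are hoping for \emph{fails}. Take $p=5$ and $r=2$. The congruence $(6k-1)^2\equiv 24r+1\equiv 4\pmod 5$ gives the two cosets $k\equiv -1$ and $k\equiv -2\pmod 5$. Applying Jacobi's triple product to each coset (with base $q^{75}=q^{3p^2}$) yields
\[
\sum_{k\equiv -1\,(5)}(-1)^k q^{k(3k-1)/2}=-q^{2}\,(q^{20},q^{55},q^{75};q^{75})_\infty,
\qquad
\sum_{k\equiv -2\,(5)}(-1)^k q^{k(3k-1)/2}=+q^{7}\,(q^{5},q^{70},q^{75};q^{75})_\infty,
\]
whose leading signs are \emph{opposite}. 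After dividing by $(q^5;q^5)_\infty$ each piece is individually a non-negative series, so your decomposition gives
\[
\sum_{m\ge 0} c_5(5m+2)\,q^{5m+2}
=-q^{2}\,A(q^5)+q^{7}\,B(q^5),
\]
with $A,B\in\zb_{\ge 0}[[q]]$ and $A(0)=B(0)=1$. This does \emph{not} exhibit a definite sign; one still needs the coefficient-wise inequality $A(q)\ge qB(q)$, which is a genuinely new statement not supplied by the triple product step. (It is true, of course, since Andrews' theorem holds, but it does not follow from your argument.) The same phenomenon recurs for every residue $r$ with two cosets and representatives of different parity, so for every prime $p\ge 5$ your scheme leaves an unproved inequality between two partition generating functions.

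In short: the $p=3$ case works exactly as you say, and the vanishing for non-residues is correct, but the two-coset case needs an additional idea beyond ``apply Jacobi and check that the signs happen to match''---because they do not.
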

We say that the coefficients $c_p(n)$ have a sign pattern of period $p$.

In September 2019, as a result of experimentation using computer algebra,
the first author of the present paper presented a
conjecture~\cite[Conjecture~1]{Schl2019}
in a tribute dedicated to Richard Askey.
We reproduce this conjecture in Conjecture~\ref{conj1} below;
one of the main results of this paper is a partial affirmation of it,
see Corollary~\ref{cor}.
\begin{conjecture}\label{conj1}
Let $\delta$ be a real number satisfying
\begin{equation*}
0.227998127341\ldots\approx\frac{9-\sqrt{73}}2\le \delta\le 1\quad\text{or}
\quad 2\le \delta\le 3.
\end{equation*}
Then the series $A^{(\delta)}(q)$, $B^{(\delta)}(q)$, $C^{(\delta)}(q)$
appearing in the dissection
\begin{equation*}
G_3(q)^\delta=
A^{(\delta)}(q^3)-qB^{(\delta)}(q^3)-q^2C^{(\delta)}(q^3)
\end{equation*}
are power series in $q$ with non-negative real coefficients.
\end{conjecture}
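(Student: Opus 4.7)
My plan is to apply the Hardy--Ramanujan--Rademacher circle method to the $q$-series $G_3(q)^\delta=\sum_{n\ge0}c_3^{(\delta)}(n)q^n$ so as to obtain an asymptotic expansion of $c_3^{(\delta)}(n)$ sharp enough to determine its sign in each residue class modulo $3$. Writing $G_3(q)=q^{1/12}\eta(\tau)/\eta(3\tau)$ with $q=e^{2\pi\ri\tau}$ in terms of the Dedekind eta function, the factor $\eta(\tau)^\delta/\eta(3\tau)^\delta$ transforms, at each rational cusp $h/k$ with $\gcd(h,k)=1$, with a $\delta$-th power of the Dedekind $\eta$-multiplier involving $s(h,k)$ and $s(3h,k)$ from \eqref{Dedsum}. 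A saddle-point analysis of the leading exponent shows that $G_3(q)^\delta$ decays as $q\to 1$ (the cusp $k=1$) and peaks at the primitive cubic roots of unity, so the dominant contribution to $c_3^{(\delta)}(n)$ comes from the cusp family $k=3$. Combining the two conjugate contributions at $h=1$ and $h=2$ yields an asymptotic of the form
\[
c_3^{(\delta)}(n)=\alpha(\delta)\,n^{-a(\delta)}\,e^{b(\delta)\sqrt{n}}
\cos\!\bigl(2\pi n/3-\phi(\delta)\bigr)+E_\delta(n),
\]
with positive amplitude $\alpha(\delta)$, exponent $b(\delta)>0$, and tail $E_\delta(n)$ absorbing the cusps with $k\ge 6$.

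Next, I would translate the dissection sign requirements $c_3^{(\delta)}(3m)\ge 0$ and $c_3^{(\delta)}(3m+1),\,c_3^{(\delta)}(3m+2)\le 0$ into a joint condition on $\phi(\delta)$ by evaluating the cosine at the three residues. The three conditions coalesce into $|\phi(\delta)|\le \pi/6$, and combined with the explicit computation of $\phi(\delta)$ from the multiplier system (and possibly a secondary correction from $k=6$) this constraint should carve out precisely the intervals $[(9-\sqrt{73})/2,1]$ and $[2,3]$, with the algebraic endpoint $(9-\sqrt{73})/2$ emerging as the positive root of a quadratic expressing the boundary of one of the three sign conditions. Inside the intervals, the leading cosine is bounded below in modulus on each residue class, so if the main term dominates $E_\delta(n)$, the conjecture follows for all $n$ beyond a threshold $N_0(\delta)$. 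Andrews's Theorem~\ref{thm:a} at $\delta=1$ serves as a model: once the sign is established at one index in each progression, the inequality $c_3(n)c_3(n+3)\ge 0$ propagates it throughout the progression; an analogous rigidity in $\delta$ would sharply reduce the required verification.

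The small-$n$ regime is handled via the observation that $c_3^{(\delta)}(n)$ is a polynomial in $\delta$ of degree at most $n$, obtained by term-wise binomial expansion of $(1-q^m)^\delta$. Hence checking non-negativity of $A^{(\delta)}$, $B^{(\delta)}$, $C^{(\delta)}$ up to degree $N_0$ reduces to verifying a finite list of univariate polynomial inequalities on two closed real intervals, a task discharged in principle by Sturm sequences or certified interval arithmetic. The main obstacle, and the reason the present paper only partially affirms the conjecture via Corollary~\ref{cor}, is making the main term dominate $E_\delta(n)$ \emph{uniformly} in $\delta$ down to $N_0$, especially near the four boundary values of the range where $\cos(2\pi n/3-\phi(\delta))$ approaches zero on one residue class and $E_\delta(n)$ becomes competitive with the leading term. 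A complete proof will likely require either a second-order asymptotic expansion that resolves the sign near the boundaries or a structural identity for $G_3(q)^\delta$ going beyond what the circle method alone supplies.
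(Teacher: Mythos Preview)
The statement is a conjecture that the paper does not prove; it only partially affirms it (Corollary~\ref{cor}: the sign pattern holds for $n\ge 158$ and $0.227\le\delta\le 2.9999$). Your circle-method strategy mirrors the paper's route to that partial result, and you correctly identify the dominant cusps $3\mid k$ and the resulting main term with factor $\cos(\pi\delta/18+2\pi n/3)$.

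Where your proposal goes wrong is the claim that the phase condition, possibly refined by a $k=6$ correction, ``should carve out precisely the intervals $[(9-\sqrt{73})/2,1]$ and $[2,3]$''. It cannot: the leading cosine is nonzero on all three residue classes for \emph{every} $\delta\in(0,3)$, and indeed Corollary~\ref{cor} establishes the sign pattern for all $n\ge 158$ uniformly across $[0.227,2.9999]$, \emph{including} the gap $1<\delta<2$ that the conjecture excludes (see the Remark following the corollary). The conjecture's intervals are determined entirely by small-$n$ constraints, not by the asymptotics. For instance, a direct expansion gives
\[
c_3^{(\delta)}(3)=-\frac{\delta(\delta^2-9\delta+2)}{6},
\]
so the requirement $c_3^{(\delta)}(3)\ge 0$ is exactly $\delta\ge(9-\sqrt{73})/2$; the algebraic endpoint comes from this single coefficient, not from the multiplier system. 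Likewise the exclusion of $(1,2)$ arises from sign failures of specific low-index coefficients. Your diagnosis of ``four boundary values where the cosine approaches zero'' is therefore also off: only $\delta=3$ is asymptotically critical (there $\cos(\pi/6+4\pi/3)=0$ on the residue $n\equiv 2\pmod 3$), which is why Corollary~\ref{cor} stops at $2.9999$ and the paper handles $\delta=3$ separately via the algebraic Theorem~\ref{thm:a3}. At the other three endpoints the cosine is bounded well away from zero; the obstruction there is purely the finite polynomial check you propose with Sturm sequences---which is where the conjecture actually lives, and which neither you nor the paper carry out.
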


With other words, for the exponent $\delta$ within the specified range
of real numbers the $q$-series coefficients of $G_3(q)^\delta$
exhibit the sign pattern $+--$.

We present several similar conjectures on precise sign patterns
for other infinite products raised to a power within specified ranges
of real numbers in Appendix~\ref{sec:app}.

The validity of Conjecture~\ref{conj1} for $\delta=1$ is known and easy
to prove by using Jacobi's triple product identity,
see e.g.\ \cite{St1999}. For $\delta=3$, we actually have a
result for any prime $p$, not only for $p=3$, see Theorem~\ref{thm:a3}.

It is actually not difficult to explain why the condition
$\delta\in[\frac{9-\sqrt{73}}2,1]\cup[2,3]$
(leaving out the trivial case $\delta=0$) is \textit{necessary}
for the sign-pattern $+--$ to hold.
In fact, we have the Taylor
series expansion (which is routine to compute using any
computer algebra system)
\begin{align*}
  G_3(q)^\delta
  &=1-\delta q+\frac{\delta(\delta-3)}2 q^2-
    \frac{\delta(\delta^2-9\delta+2)}6 q^3
    +\frac{\delta(\delta^3-18\delta^2+35\delta-42)}{24} q^4\\*
  &\quad\,
  -\frac{\delta(\delta-1)(\delta-2)(\delta-3)(\delta-24)}{120} q^5
  +O(q^6).
\end{align*}
For the sign pattern $+--$ to hold, first of all the coefficient of $q^1$
in $G_3(q)^\delta$ should be non-positive. This implies $\delta>0$.
(We excluded the trivial case $\delta=0$ in the first place.)
The coefficient of $q^2$ should
be non-positive as well. This forces $0<\delta\le 3$.
We turn to the coefficient of $q^3$. The two roots of $\delta^2-9\delta+2$
are $\frac{9\pm\sqrt{73}}2$ and it is easy to see that
the coefficient of $q^3$ can only be non-negative
if $\frac{9-\sqrt{73}}2\le\delta\le\frac{9+\sqrt{73}}2$.
Since $\delta\le 3$ (from before) we have reached the point that we need
$\frac{9-\sqrt{73}}2\le\delta\le 3$.
Finally, for the coefficient of $q^5$
(we don't need to consider the coefficient of $q^4$ here)
to be non-positive we obviously  need to exclude $1<\delta<2$.
Altogether we have explained the necessity of the
specified range of real numbers for $\delta$.
The surprising fact is that this range is also (conjectured to be)
\textit{sufficient} for all of the coefficients to satisfy
the sign pattern $+--$.

While Conjecture~\ref{conj1} concerns a statement about
a sign-pattern that holds from the first coefficient on
for suitably restricted $\delta>0$, we actually believe that the
sign pattern $+--$ holds for any $\delta>0$ in an
asymptotic sense, namely from the $n$-th coefficient on,
where $n$ is an integer depending on $\delta$.

This serves as our motivation to apply an asymptotic approach
towards settling Conjecture~\ref{conj1} where we initially
just assume $\delta>0$
(not further restricted), and only later restrict
$\delta$ to be within specified intervals when desired.
To achieve our goal we shall employ the Hardy--Ramanujan
circle method perfected by Rademacher~\cite{MR8618}
(see also \cite[Chapter~14]{MR0364103}).
This will enable us to give an asymptotic formula for the $q$-series
coefficients $c_p^{(\delta)}(n)$ appearing in the infinite Borwein product
raised to a real power $\delta>0$, i.e.\ of
\begin{equation}
G_p(q)^{\delta}=\sum_{n\ge 0}c_p^{(\delta)}(n)q^n,
\end{equation}
where $p$ is any prime (not necessarily $p=3$).
We shall refer to the $c_p^{(\delta)}(n)$ as \emph{Borwein coefficients}.

At this point it is appropriate to mention that the use of asymptotic machinery
to prove positivity results (including sign patterns) for the coefficients
appearing in infinite $q$-products is quite established and known to be
efficient.
In particular, Richmond and Szekeres~\cite{MR515217}, making heavy use of
results of Iseki~\cite{MR95807,MR0108473},
employed the Rademacher circle method to
prove the sign pattern of the G\"ollnitz--Gordon continued fraction.
Recently, Chern~\cite{Ch2019,Ch2020} established the asymptotics of the
coefficients of any finite product of Dedekind eta functions, and similarly
the asymptotics of the coefficients of $q$-products satisfying
modular symmetries.
His methods are very similar to those we use in the present paper
but we consider arbitrary real powers of the infinite products
(and our applications are of a different, more analytic nature).
We would also like to mention that
C.~Wang~\cite{Wang2019} recently utilized asymptotic machinery to
settle the famous first Borwein Conjecture (cf.\ \cite{MR1395410})
which is a statement about the coefficients appearing in a sequence of
\textit{finite} products.
Some related open conjectures about sequences of infinite
products were recently raised by Bhatnagar and the first author in
\cite{BhSchl2019}, however, no attempt was made there to attack the
conjectures by asymptotic machinery or by other means.

\smallskip
In order to state our results we recall the definition
of the modified Bessel function of the first kind $I_1(z)$ given by
\begin{equation}\label{eqbs}
I_1(z):=\sum_{n\ge 0}\frac{1}{n!(n+1)!}\Big(\frac{z}{2}\Big)^{2n+1},
\end{equation}
cf.\ \cite[p.~222, Equation~(4.12.2)]{AAR}, which is an entire function.
Its integral representation is
\begin{equation}\label{eq:intrpI1}
  I_1(z)=\frac{(z/2)}{2\pi\ri}\int_{1-\ri\infty}^{1+\ri\infty}
  e^{w+z^2/4w}w^{-2}\rd w,
\end{equation}
cf.\ \cite[p.~236, Exercise~13]{AAR}.

For any prime $p$, we have the following asymptotic formula
(of arbitrary positive integer order $N$) for the Borwein
coefficients $c_p^{(\delta)}(n)$,
where $\delta$ is within a specified range of positive real numbers
depending on $p$. (Recall that, according to \eqref{om} and \eqref{Dedsum},
$\omega_{h,k}$ denotes certain exponentials of Dedekind sums.)
\begin{theorem}\label{mth}
  Let $\delta\in(0,24/(p-1)]$ and let $N\in\nb$. For each integer $n\ge 1$
  we have
\begin{align*}
c_p^{(\delta)}(n)&=\frac{2\pi \delta^{1/2}}{\sqrt{\frac{24n}{p-1}-\delta}}
\sum_{1\le k\le N }A_{pk}^{(\delta)}(n)I_1\Bigg(\frac{(p-1)\pi}{6pk}
\sqrt{\delta\bigg(\frac{24n}{p-1}-\delta\bigg)}\Bigg)\\*
&\quad\;+e^{\frac{(24n-(p-1)\delta)\pi }{6p^2N^2}}E_{p,N}^{(\delta)}(n),
\end{align*}
where
\begin{equation*}
A_k^{(\delta)}(n)=\frac{1}{k}\sum_{\substack{0\le h<k\\ \gcd(h,k)=1}}
\Big(\omega_{h,k}^{-1}\omega_{h,\frac{k}{p}}\Big)^{\delta}
e^{-\frac{2\pi\ri hn}{k}}.
\end{equation*}
%where $\omega_{h,k}=e^{\pi\ri s(h,k)}$, with $s(h,k)$ being
%the Dedekind sum defined in \eqref{Dedsum}.
Further, the error term $E_{p,N}^{(\delta)}(n)$ satisfies the bound
\begin{align*}
\Big|E_{p,N}^{(\delta)}(n)\Big|&\le\frac{(p-1)e^{\frac{(p-1)\pi\delta}{12}} }
{p^2}\Big(\pi\sqrt{2}-2+
2f\big(e^{-6\pi }\big)^{\delta}f\big(e^{-2\pi}\big)^{\delta}\Big)\\*
&\quad\;+ \frac{ 2(p-1)\cdot e^{-\frac{\pi(p-1)\delta}{12p}}}
{p^{1-\delta/2}}f\Big(e^{-\frac{2\pi }{p}}\Big)^{\delta}
f\big(e^{-2\pi}\big)^{\delta}.
\end{align*}
\end{theorem}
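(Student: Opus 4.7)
The plan is to apply the Hardy--Ramanujan--Rademacher circle method directly to $G_p(q)^\delta$. Using $\eta(\tau)=q^{1/24}/f(q)$ with $q=e^{2\pi\ri\tau}$, I first rewrite
$$G_p(q)^\delta = q^{(p-1)\delta/24}\!\left(\frac{\eta(\tau)}{\eta(p\tau)}\right)^{\!\delta},$$
so Cauchy's theorem expresses $c_p^{(\delta)}(n)$ as a line integral of $G_p(e^{2\pi\ri\tau})^\delta e^{-2\pi\ri n\tau}$ along $\tau=x+\ri/N^2$, $x\in[0,1]$. Rademacher's Farey dissection subdivides $[0,1]$ into arcs $\xi_{h,k}$ around the fractions $h/k$ with $1\le k\le N$ and $\gcd(h,k)=1$. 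On each $\xi_{h,k}$ I substitute $\tau=(h+\ri z)/k$ and apply the standard modular transformation of Dedekind eta, which has the shape
$$\eta\!\left(\tfrac{h+\ri z}{k}\right)=\omega_{h,k}^{-1}\,z^{-1/2}\,\eta\!\left(\tfrac{h'+\ri/z}{k}\right),\qquad hh'\equiv-1\pmod{k},$$
with a consistent choice of the principal branch of $z^{-1/2}$.

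The analysis then splits according to $\gcd(p,k)$. If $p\mid k$, writing $k=pk'$, we have $p\tau=(h+\ri z)/k'$, so the same transformation applies with $k$ replaced by $k/p$. In the ratio $(\eta(\tau)/\eta(p\tau))^\delta$ the $z^{-\delta/2}$ factors cancel exactly (reflecting the modular weight zero of $\eta(\tau)/\eta(p\tau)$), and the phases combine into $(\omega_{h,k}^{-1}\omega_{h,k/p})^\delta$, matching the factor $A_k^{(\delta)}(n)$. A short calculation then shows the transformed integrand has leading exponential part $\exp(A/z+Bz)$ with $A=\pi(p-1)\delta/(12k)$ and $B=\pi(24n-(p-1)\delta)/(12k)$; the Farey-arc integral $\int\exp(A/z+Bz)\,dz$ over the extended vertical $z$-contour is identified via \eqref{eq:intrpI1} (applied with $\nu=-1$, using $I_{-1}=I_1$) as $2\pi\ri\sqrt{A/B}\,I_1(2\sqrt{AB})$, whose Bessel argument $2\sqrt{AB}=\tfrac{(p-1)\pi}{6pk}\sqrt{\delta(24n/(p-1)-\delta)}$ and outer coefficient $\sqrt{A/B}=\sqrt{\delta/(24n/(p-1)-\delta)}$ match the theorem's main sum. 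If instead $p\nmid k$, the argument of $\eta(p\tau)$ after transformation lies at a cusp whose denominator carries an extra factor of $p$, so the exponential singularities of $\eta(\tau)^\delta$ and $\eta(p\tau)^{-\delta}$ do not align and the transformed integrand remains merely bounded---these arcs contribute only to the error.

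The bound on $E_{p,N}^{(\delta)}(n)$ is assembled from three error streams. First, replacing each transformed $\eta(\cdot)$ on a major arc by its leading term $1$ introduces a subleading error controlled via the Euler product by $f(e^{-2\pi/p})^\delta f(e^{-2\pi})^\delta$; summing over Farey fractions with $p\mid k$ produces the second line of the error with prefactor $\tfrac{2(p-1)}{p^{1-\delta/2}}e^{-\pi(p-1)\delta/(12p)}$. Second, extending each truncated vertical $z$-contour on a major arc back to the full vertical line so as to apply \eqref{eq:intrpI1} contributes the explicit constant $\pi\sqrt{2}-2$ via elementary length estimates of the added circular segments. Third, the minor-arc ($p\nmid k$) contributions are estimated uniformly by $f(e^{-6\pi})^\delta f(e^{-2\pi})^\delta$ on the transformed integrand, and summing against the total Farey-arc length produces the remaining prefactor $\tfrac{(p-1)}{p^2}$. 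The hypothesis $\delta\le 24/(p-1)$ ensures $(p-1)\delta/24\le 1$, keeping the Bessel argument real and the errors genuinely subdominant to the main term.

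The principal obstacle is the careful bookkeeping of phases and explicit constants: verifying that the combined eighth roots of unity from the two eta transformations on the major arcs multiply out to precisely $(\omega_{h,k}^{-1}\omega_{h,k/p})^\delta$ with no extraneous twist, and that the three error streams combine with the exact prefactors stated in the theorem. These verifications extend the classical integer-exponent arguments (as in Rademacher and Chern~\cite{Ch2019}) to arbitrary positive real $\delta$, where $\eta^\delta$ is multi-valued and a consistent branch of the power function must be selected throughout the computation.
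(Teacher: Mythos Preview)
Your overall strategy is the paper's: Rademacher's circle method, the modular transformation of $G_p$, the split according to $d=\gcd(p,k)$, and three error streams (contour completion, replacing the transformed product by $1$, and the arcs with $p\nmid k$). That part is fine.

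Where your bookkeeping goes wrong is in attributing the two lines of the stated error bound to the wrong streams. After the transformation of Proposition~\ref{prop1}, when $p\mid k$ one has $d=p$, so $(p/d)^{\delta/2}=1$ and $\hat G_p$ involves $f\big(e^{-2\pi p/z}\big)$; the ``replace by $1$'' error $I_R$ is therefore controlled by $f(e^{-2p\pi})^\delta f(e^{-2\pi})^\delta$ (written as $f(e^{-6\pi})^\delta f(e^{-2\pi})^\delta$ in the statement), and combining $I_R$ with the contour-completion error $I_{ME}$ over the arcs with $p\mid k$---using $\varphi(k)\le(1-1/p)k$---is what produces the \emph{first} line with prefactor $(p-1)/p^2$. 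Conversely, when $p\nmid k$ one has $d=1$, so the transformation carries the extra factor $p^{\delta/2}$ and $\hat G_p$ involves $f\big(e^{-2\pi/(pz)}\big)$; hence the minor-arc error $E$ is bounded via $f(e^{-2\pi/p})^\delta f(e^{-2\pi})^\delta$, and the $p^{\delta/2}$ is exactly what yields the prefactor $2(p-1)/p^{1-\delta/2}$ in the \emph{second} line. You have these two swapped, and as written your three streams would not assemble into the claimed constants.

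A smaller point: the hypothesis $\delta\le 24/(p-1)$ does more than keep the Bessel argument real. It is what guarantees that on the major arcs the function $e^{\pi(p-1)\delta t/12}\big(f(e^{-2p\pi t})^\delta f(e^{-2\pi t})^\delta-1\big)$, viewed as a function of $t=\Re(1/z)\ge 1$, remains bounded by its value at $t=1$; without this the $I_R$ estimate would fail.
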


\begin{remark}
Throughout this paper, $z^{a}:=e^{a\log z}$ and the logarithms
are always understood to assume their principal values,
that is $\arg(z)\in[-\pi, \pi)$.
\end{remark}
\begin{remark}
The right-hand side of the inequality for $\big|E_{p,N}^{(\delta)}(n)\big|$
in Theorem~\ref{mth} is independent from $N$ and $n$,
thus the error term $E_{p,N}^{(\delta)}(n)$ is $O(1)$.
By using a similar argument to that of Rademacher and Zuckerman in
their proof of \cite[Theorem~1]{MR1503417}, we can extend the
specified region for $\delta$ in Theorem~\ref{mth}
(which is $\delta\in(0,24/(p-1)]$) to all $\delta>0$,
still with an $O(1)$ error term.
However, the expressions for the main term and the effective error term
are then more complicated. Since we are mainly interested in the
asymptotics in certain confined regions (after all, our main aim
concerns the development of tools to understand and tackle concrete
observations such as those in Conjecture~\ref{conj1} and similar
conjectures in Appendix~\ref{sec:app}), we leave the details
of using Rademacher and Zuckerman's method to to extend the range of
$\delta$ to all positive reals to the interested reader.
\end{remark}

Focusing on the case $p=3$, we can use Theorem~\ref{mth} to give
the following growth estimate for the Borwein coefficients $c_3^{(\delta)}(n)$
for $\delta$ within a specified range.

\begin{theorem}\label{mth1}
Let $\delta\in[0.227,3]$, and define
\begin{align*}
\hat{c}_3^{(\delta)}(n)&
 =\frac{2\pi \delta^{1/2}}{3\sqrt{12n-\delta}}\,
I_1\Big(\frac{\pi}{9}\sqrt{\delta\left(12n-\delta\right)}\Big),\\*
L_{\delta,n}&=\frac{\pi}{18}\sqrt{\delta\left(12n-\delta\right)},
\end{align*}
and
$$w(\delta)=\frac{1}{2}\log\bigg(\frac{1}{\delta}\bigg)+
\frac{0.736\,\big(1.689^{\delta}\big(1.222+1.002^{\delta}\big)+
    3\cdot 1.692^{\delta}\big)}{\delta}+0.119. $$
Then we have for all $n\in\nb$ the inequality
\begin{equation*}
\Bigg|\frac{c_3^{(\delta)}(n)}{\hat{c}_3^{(\delta)}(n)}
- \cos\bigg(\frac{\pi\delta}{18}+\frac{2\pi n}{3}\bigg)\Bigg|\le
\frac{L_{\delta,n} w(\delta)+L_{\delta,n}\log L_{\delta,n}
+2 I_1(L_{\delta,n})}{I_1(2L_{\delta,n})}.
\end{equation*}
\end{theorem}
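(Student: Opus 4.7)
The plan is to apply Theorem \ref{mth} with $p=3$, choose the truncation $N$ depending on $L_{\delta,n}$, extract the $k=1$ term in closed form, and uniformly bound the tail of the sum together with the Theorem \ref{mth} error.

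First, I would compute $A_3^{(\delta)}(n)$ explicitly. Evaluating the Dedekind sums $s(1,3)=1/18$ and $s(2,3)=-1/18$ (together with $s(h,1)=0$) gives $\omega_{1,3}=e^{\pi i/18}$, $\omega_{2,3}=e^{-\pi i/18}$, and $\omega_{h,1}=1$. Substituting into the formula for $A_3^{(\delta)}(n)$ and using $e^{-4\pi i n/3}=e^{2\pi i n/3}$ for integer $n$ collapses the $h=1$ and $h=2$ contributions into a cosine in $\tfrac{\pi\delta}{18}+\tfrac{2\pi n}{3}$, identifying the $k=1$ summand of Theorem \ref{mth} with the cosine term on the left-hand side of the target inequality. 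Subtracting this and dividing through by $\hat{c}_3^{(\delta)}(n)$ reduces the theorem to bounding the tail $k=2,\ldots,N$ together with the Theorem \ref{mth} error.

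For the tail, the $k=2$ contribution is dominated by $|A_{6}^{(\delta)}(n)|\cdot I_1(L_{\delta,n})/I_1(2L_{\delta,n})$; using the trivial estimate $|A_{6}^{(\delta)}(n)|\le 1$ yields the $2I_1(L_{\delta,n})/I_1(2L_{\delta,n})$ summand of the target bound. For $k\ge 3$, an elementary estimate such as $I_1(x)\le (x/2)\cosh(x)$ (or direct use of the series \eqref{eqbs}) controls $I_1(2L_{\delta,n}/k)$; summing via $\sum_{k\le N}1/k = O(\log N)$ and taking $N$ polynomial in $L_{\delta,n}$ produces the $L_{\delta,n}\log L_{\delta,n}/I_1(2L_{\delta,n})$ summand.

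For the error term, the identity $12n-\delta = 324 L_{\delta,n}^2/(\pi^2\delta)$ rewrites the exponential prefactor $e^{(24n-2\delta)\pi/(54 N^2)}$ from Theorem \ref{mth} as $e^{12 L_{\delta,n}^2/(\pi\delta N^2)}$. Choosing $N$ comparable to $L_{\delta,n}/\sqrt{\log L_{\delta,n}}$ makes this exponent $O(\log L_{\delta,n})$, so the error factor stays polynomially bounded; then, invoking Theorem \ref{mth}'s explicit bound on $|E_{3,N}^{(\delta)}(n)|$ (which depends on $\delta$ only through $f(e^{-2\pi/3})$, $f(e^{-6\pi})$, $f(e^{-2\pi})$), one recovers the $L_{\delta,n} w(\delta)/I_1(2L_{\delta,n})$ summand. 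The explicit numerical constants in $w(\delta)$ trace back to numerical evaluation of these three infinite products.

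The main obstacle is balancing $N$ so that neither the tail sum nor the error term dominates, uniformly in $n$: the tail bound requires $N$ moderately small, while the error bound requires $N$ moderately large, so the two must be tuned together. A secondary technical issue concerns the small-$n$ regime, where $L_{\delta,n}$ is small and asymptotic estimates for $I_1$ are inaccurate, so one must instead use the series \eqref{eqbs} directly to confirm the stated bound.
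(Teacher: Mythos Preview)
Your overall outline---specialize Theorem~\ref{mth} to $p=3$, identify the $k=1$ term with the cosine via an explicit evaluation of $A_3^{(\delta)}(n)$, and bound the remaining sum plus error---matches the paper. The genuine gap is in your choice of the truncation parameter $N$.

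You take $N$ comparable to $L_{\delta,n}/\sqrt{\log L_{\delta,n}}$, so that the exponent $12L_{\delta,n}^2/(\pi\delta N^2)$ in the error prefactor is $O(\log L_{\delta,n})$ and the prefactor is ``polynomially bounded''. But that polynomial has degree $12/(\pi\delta)$, which for $\delta$ near $0.227$ is roughly $17$. After dividing by $\hat c_3^{(\delta)}(n)\asymp L_{\delta,n}^{-1}I_1(2L_{\delta,n})$ you get a contribution of order $L_{\delta,n}^{\,1+12/(\pi\delta)}/I_1(2L_{\delta,n})$, which is \emph{not} of the required shape $L_{\delta,n}\,w(\delta)/I_1(2L_{\delta,n})$ with $w(\delta)$ depending only on $\delta$. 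So your claimed recovery of the $w(\delta)$ summand fails, and with it the stated inequality.

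The fix, and what the paper does, is to take $N$ \emph{linear} in $L_{\delta,n}$; concretely $N=\big\lceil(20L_{\delta,n}^2/\delta)^{1/2}\big\rceil$. Then the exponent $12L_{\delta,n}^2/(\pi\delta N^2)$ is bounded by an absolute constant (namely $3/(5\pi)$), so the error prefactor is $O(1)$ and, upon division by $\hat c_3^{(\delta)}(n)$, genuinely produces a term $L_{\delta,n}\,w(\delta)/I_1(2L_{\delta,n})$. Your worry about balancing is therefore misplaced: there is no tension, since with $N$ linear in $L_{\delta,n}$ the tail bound still only costs $\log N\sim\log L_{\delta,n}$. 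The paper packages the tail estimate into a single lemma bounding $\sum_{2\le k\le N}I_1(2x/k)$ by $x\log N+2I_1(x)-(\text{positive})\,x$ via the power series \eqref{eqbs}; note that the $2I_1(L_{\delta,n})$ comes from this lemma (applied to \emph{all} $k\ge 2$), not from the $k=2$ term alone as you suggest, and your proposed bound $I_1(x)\le(x/2)\cosh x$ is far too weak for small $k$.
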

We obtained the numerical constants appearing in the expression
for $w(\delta)$ with the aid of \emph{Mathematica};
a strengthening of the result with a higher precision of the involved
constants is a question of computational resources (suitable software,
running time and memory).
In principle, Theorem~\ref{mth} would even enable us to give
precise growth estimates for the Borwein coefficients $c_3^{(\delta)}(n)$
for $\delta$ within the larger range $[\epsilon,12]$ where $\epsilon$
is any given positive real number.
Now our experimentation using \textit{Mathematica} showed that
the computations converge considerably faster for
$\epsilon\le\delta\le 3$ where $\epsilon$ is not much less than
$\frac{9-\sqrt{73}}{2}$ than outside this region (which
is not a big surprise in view of Conjecture~\ref{conj1}).
Therefore, for practical computational reasons we took $\epsilon=0.227$
and restricted the initial range $(0,12]$ (coming from the $p=3$ case of
Theorem~\ref{mth}) to the range $[0.227,3]$ which is still larger
than the range for $\delta$ specified in Conjecture~\ref{conj1},
namely $[\frac{9-\sqrt{73}}2,1]\cup[2,3]$ which is the range we mainly
care about. At this point we would like to remind
the reader that Conjecture~\ref{conj1} concerns an assertion about the
\textit{precise} behavior of the coefficients of a series while
Theorem~\ref{mth1} (and the following Corollary~\ref{cor})
concerns their \textit{asymptotic} behavior.

Finally, we are able to partially affirm Conjecture~\ref{conj1}
(again, with numerical constants obtained with the aid of
\emph{Mathematica}) in the following form:
\begin{corollary}\label{cor}
For all integers $n\ge 158$ and for all $\delta$ such that
\begin{equation*}
0.227\le \delta\le 2.9999,
\end{equation*}
we have
$$c_3^{(\delta)}(n)\,c_3^{(\delta)}(n+3)>0.$$
\end{corollary}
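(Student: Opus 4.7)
The plan is to derive the corollary directly from Theorem~\ref{mth1} by reducing it to a single numerical inequality. Two elementary observations suffice. First, the cosine in Theorem~\ref{mth1} is invariant under $n\mapsto n+3$, since $\tfrac{2\pi(n+3)}{3}-\tfrac{2\pi n}{3}=2\pi$; denote the common value by $C(\delta,n):=\cos\bigl(\tfrac{\pi\delta}{18}+\tfrac{2\pi n}{3}\bigr)$. Second, $\hat{c}_3^{(\delta)}(n)>0$ for all $n\ge 1$ and $\delta>0$, because $L_{\delta,n}>0$ and $I_1$ is positive on $(0,\infty)$. Writing $R(\delta,n)$ for the right-hand side of the inequality in Theorem~\ref{mth1}, the bound $R(\delta,n)<|C(\delta,n)|$ forces $c_3^{(\delta)}(n)$ and $c_3^{(\delta)}(n+3)$ to share the sign of $C(\delta,n)$, so their product is strictly positive. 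The corollary is thus reduced to proving $R(\delta,n)<|C(\delta,n)|$ for all $\delta\in[0.227,2.9999]$ and $n\ge 158$.

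I would first secure a uniform positive lower bound for $|C(\delta,n)|$. Since $n$ enters only through its residue modulo $3$, one minimises $|C(\delta,n)|$ over the three values $\cos\bigl(\tfrac{\pi\delta}{18}+\tfrac{2\pi j}{3}\bigr)$ for $j\in\{0,1,2\}$. The cases $j=0,1$ stay comfortably bounded away from zero on $[0.227,2.9999]$, while $\cos\bigl(\tfrac{\pi\delta}{18}+\tfrac{4\pi}{3}\bigr)=\tfrac{1}{2}\bigl(-\cos\tfrac{\pi\delta}{18}+\sqrt{3}\,\sin\tfrac{\pi\delta}{18}\bigr)$ vanishes linearly at $\delta=3$. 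The choice $\delta\le 2.9999$ is imposed precisely by this degeneration and produces an explicit lower bound $\kappa_0=\sin(\pi/180000)\approx 1.7\cdot 10^{-5}$, independent of $n$.

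Next I would bound $R(\delta,n)$ from above and show it is eventually monotone decreasing in $n$. Since $L_{\delta,n}=\tfrac{\pi}{18}\sqrt{\delta(12n-\delta)}$ grows like $\sqrt{n}$ uniformly in $\delta$, the classical asymptotic $I_1(z)\sim e^z/\sqrt{2\pi z}$ gives $2I_1(L_{\delta,n})/I_1(2L_{\delta,n})\sim 2\sqrt{2}\,e^{-L_{\delta,n}}$, and this dominates the remaining two summands of $R(\delta,n)$, each of order $L_{\delta,n}^{3/2}\log L_{\delta,n}\cdot e^{-2L_{\delta,n}}$, uniformly in $\delta\in[0.227,2.9999]$. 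To turn this into an effective estimate I would replace the asymptotic by an explicit lower bound for $I_1(2L_{\delta,n})$ obtained from the series~\eqref{eqbs} or the integral representation~\eqref{eq:intrpI1}, and exploit the continuity of $w(\delta)$ on the closed interval $[0.227,2.9999]$ to control it by its maximum. Monotonicity of the resulting envelope in $n$ then follows by an elementary differentiation check.

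The final step is the numerical verification that $R(\delta,158)<\kappa_0$ uniformly in $\delta\in[0.227,2.9999]$, which I would carry out in \emph{Mathematica} on a sufficiently fine grid, using interval arithmetic to handle values between sampled points rigorously. This uniform check is the principal obstacle: since $|C(\delta,n)|$ is as small as $\sim 10^{-5}$ near $\delta=3$, one must push $n$ large enough that the exponential decay in $\sqrt{n}$ of the dominant summand $2I_1(L_{\delta,n})/I_1(2L_{\delta,n})$ overcomes the cosine lower bound, and $n=158$ is essentially the smallest integer at which this happens uniformly over the full $\delta$-interval. Once this verification is complete, the monotonicity established in the preceding step extends the inequality to all $n\ge 158$, and the reduction in the first paragraph finishes the argument.
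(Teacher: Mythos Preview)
Your reduction and the monotonicity step are essentially the paper's own argument (the paper packages the monotonicity as a separate lemma, proving that $M(L_{\delta,n}):=R(\delta,n)$ is decreasing in $n$ for $n\ge 158$ via monotonicity of $u/I_1(2u)$, $u\log u/I_1(2u)$ and $I_1(u)/I_1(2u)$). The genuine gap is in your final numerical step.

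You decouple the two sides: you take a \emph{uniform} lower bound $\kappa_0=\sin(\pi/180000)\approx 1.7\times 10^{-5}$ for $|C(\delta,n)|$ over all $\delta\in[0.227,2.9999]$ and all residues $j$, and then propose to verify $R(\delta,158)<\kappa_0$ uniformly in $\delta$. This cannot hold. The tiny value $\kappa_0$ is realised near $\delta=3$ (where $L_{\delta,158}\approx 13$ and $R$ is indeed of order $10^{-6}$), but at the other end of the interval, say $\delta=0.227$, one has $L_{\delta,158}\approx 3.6$, so $2I_1(L_{\delta,158})/I_1(2L_{\delta,158})$ alone is already of order $10^{-1}$, and the term $L_{\delta,158}\,w(\delta)/I_1(2L_{\delta,158})$ (with $w(0.227)\approx 20$) is even larger. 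Thus $R(0.227,158)$ exceeds $\kappa_0$ by several orders of magnitude, and the uniform check you describe is false.

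The fix, and what the paper actually does, is \emph{not} to decouple: one verifies
\[
\Bigl|\cos\Bigl(\tfrac{\pi\delta}{18}+\tfrac{2\pi j}{3}\Bigr)\Bigr|>R(\delta,158)
\]
pointwise in $\delta\in[0.227,2.9999]$ for each $j\in\{0,1,2\}$. Both sides vary with $\delta$: near $\delta=3$ the $j=2$ cosine and $R$ are both tiny, while near $\delta=0.227$ the smallest cosine value is about $0.46$ and $R$ is just below that. The inequality is tight at both endpoints (which is why the constants $0.227$, $2.9999$ and $158$ arise), but it is a pointwise comparison in $\delta$, not a comparison of a global minimum against a global maximum.
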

%Aided by {\bf Mathematica} further, we prove
%\begin{theorem}
%For all $n\ge 0$ and
%\begin{equation*}
%\frac{9-\sqrt{73}}2\le \delta\le 1\quad\text{or}\quad
%2\le \delta\le 2.9999, \quad\text{or}\quad \delta=3,
%\end{equation*}
%we have $c_{\delta}(n)c_{\delta}(n+3)\ge 0$.
%\end{theorem}

%If $3-\delta\in [10^{-10}, 10^{-5}]$ then for integer $n\ge 156$
%$$c_{\delta}(n)c_{\delta}(n+3)>0.$$
%In fact one can prove that there exist a constant $A>0$ such that for all $0<3-\delta<10^{-5}$ and
%$$n\ge \frac{27}{\pi^2\delta}\log\left(\frac{1}{3-\delta}\right)+A.$$
%However, our method can't give the complete answer for Conjecture \ref{coj1}. From the following identity,
%$$
%\frac{(q)_{\infty}^3+3q(q^9)_{\infty}^3}{(q^3)_{\infty}}=1+6\sum_{n\ge 0}\left(\frac{q^{3(3 n+1)}}{1-q^{3(3n+1)}}-\frac{q^{3(3n+2)}}{1-q^{3(3n+2)}}\right),
%$$
%it is clear that $c_{3}(3n+2)\equiv 0$ holds for all $n\in\nb_0$. However, Theorem \ref{mth} have an $O(1)$ error term.
\begin{remark}
While Corollary~\ref{cor} only partially affirms Conjecture~\ref{conj1},
it also gives information about the cases when
$0.227\le\delta<\frac{9-\sqrt{73}}{2}$ and $1<\delta<2$
(not covered by the conjecture).
In these cases case the corollary tells us that the
Borwein coefficients $c_3^{(\delta)}(n)$ satisfy the respective
sign pattern for large enough $n$ (namely $n\ge 158$).
\end{remark}

For the exponent $\delta=3$ we actually have the following
result for any prime $p$ which is a cubic analogue of Theorem~\ref{thm:a}:
\begin{theorem}\label{thm:a3}
For all primes $p$, $c_p^{(3)}(n)$ and $c_p^{(3)}(n+p)$
have the same sign for each $n\ge 0$, i.e.,
$$c_{p}^{(3)}(n)c_{p}^{(3)}(n+p)\ge 0,$$
for each $n\ge 0$.
\end{theorem}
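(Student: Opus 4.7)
The plan is to apply Jacobi's identity $(q;q)_\infty^3 = \sum_{n\ge 0}(-1)^n(2n+1)q^{T_n}$ with $T_n = n(n+1)/2$, and then to $p$-dissect $G_p(q)^3 = (q;q)_\infty^3\cdot(q^p;q^p)_\infty^{-3}$. Since $(q^p;q^p)_\infty^{-3}$ is a power series in $q^p$ with non-negative coefficients, the dissection of $G_p(q)^3$ is governed by the residues of the triangular exponents $T_n$ modulo $p$. First I would extend Jacobi's sum to a bilateral one (using $T_{-n-1}=T_n$) and write $n = p\ell+j$: the expansion $T_{p\ell+j} = T_j + p\ell(p\ell+2j+1)/2$ shows that $T_n \bmod p$ depends only on $j\bmod p$, and since $T_j \equiv T_{p-1-j}\pmod p$ the residue classes pair up as $\{j,p-1-j\}$ for $j<(p-1)/2$, leaving $j=(p-1)/2$ as a singleton. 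For each residue $r$ one then has an explicit dissection piece $\Phi_r(Q)$ with
\[\sum_{k\ge 0}c_p^{(3)}(r+pk)\,Q^k = \Phi_r(Q)/(Q;Q)_\infty^3,\]
and the claim reduces to showing that, for every $r$, this coefficient sequence does not change sign.

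For residues $r$ outside the image of $j\mapsto T_j\bmod p$, $\Phi_r$ is identically zero and the statement is trivial. For the singleton class $j_0=(p-1)/2$, the outer linear coefficient $2j_0+1=p$ factors out of the bilateral inner sum, and a second application of Jacobi's identity (at scale $Q^{p^2}$) collapses what remains into $2p(Q^p;Q^p)_\infty^3$. This yields the clean formula
\[\sum_{k\ge 0}c_p^{(3)}(r_{j_0}+pk)\,Q^k = \frac{(-1)^{(p-1)/2}\,p\,Q^{\lfloor T_{j_0}/p\rfloor}}{G_p(Q)^3},\]
and since $1/G_p(Q) = \prod_{p\nmid n}(1-Q^n)^{-1}$ is the generating function for partitions whose parts are not divisible by $p$, it (and therefore its cube) has non-negative coefficients. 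Hence all $c_p^{(3)}(r_{j_0}+pk)$ carry the fixed sign $(-1)^{(p-1)/2}$, and this residue class is settled.

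For the paired residues $\{j,p-1-j\}$ with $0\le j<(p-1)/2$, combining the two contributions produces
\[\Phi_{r_j}(Q) = (-1)^j\,Q^{\lfloor T_j/p\rfloor}\sum_{m\in\mathbb{Z}}(-1)^m(2pm+a_j)\,Q^{m(pm+a_j)/2},\qquad a_j=2j+1.\]
The Jacobi triple product evaluates $\sum_m(-1)^m Q^{m(pm+a_j)/2}$ as $(Q^p;Q^p)_\infty(Q^{(p+a_j)/2};Q^p)_\infty(Q^{(p-a_j)/2};Q^p)_\infty$, and a logarithmic derivative in the auxiliary Jacobi parameter supplies the missing weight $(2pm+a_j)$. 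Combined with the factorisation $(Q;Q)_\infty = (Q^p;Q^p)_\infty\prod_{k=1}^{p-1}(Q^k;Q^p)_\infty$, this lets me rewrite $\Phi_{r_j}(Q)/(Q;Q)_\infty^3$ as a product in which the denominator is a generating function of colored partitions, hence manifestly non-negative. The hard part will be to show that the remaining cofactor has a fixed-sign coefficient sequence: for $p=3$ this is the classical identity $\Phi_0(Q) = a(Q)(Q;Q)_\infty$ with Ramanujan's cubic theta $a(Q) = \sum_{(m,n)\in\mathbb{Z}^2}Q^{m^2+mn+n^2}$, which gives $\Phi_0(Q)/(Q;Q)_\infty^3 = a(Q)/(Q;Q)_\infty^2$ and manifest positivity. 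For general primes $p$ one expects an analogous level-$p$ identity whose positivity follows from representation numbers of an associated positive-definite binary quadratic form, and establishing this identity is the main technical obstacle of the proof.
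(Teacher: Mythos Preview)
Your setup is essentially the paper's: Jacobi's identity, $p$-dissection according to $T_j\bmod p$, the singleton class $j_0=(p-1)/2$ handled exactly as you do, and for each paired class the weighted theta series expressed via a logarithmic derivative of a Jacobi triple product. Up to this point the two arguments coincide.

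The genuine gap is your final step. You say that ``the hard part will be to show that the remaining cofactor has a fixed-sign coefficient sequence'' and suggest this should follow from a level-$p$ binary quadratic form identity that you do not supply. No such identity is needed. What the logarithmic derivative actually produces (this is the paper's Theorem~\ref{main}) is, for each $0\le\ell<(p-1)/2$ and with $a=(p-1-2\ell)/2$, the Lambert series
\[
\sum_{n\ge 0}\bigg(\frac{q^{pn+a}}{1-q^{pn+a}}-\frac{q^{pn+p-a}}{1-q^{pn+p-a}}\bigg)
=\sum_{n\ge 0}\frac{q^{pn+a}\,(1-q^{p-2a})}{(1-q^{pn+a})(1-q^{pn+p-a})},
\]
multiplied by $(q^{a},q^{p-a},q^{p};q^{p})_\infty$, and the whole thing is to be divided by $(q;q)_\infty^3$. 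Now observe that
\[
\frac{(q^{a},q^{p-a},q^{p};q^{p})_\infty}{(q;q)_\infty^3}
=\frac{(q^{a},q^{p-a};q^{p})_\infty}{(q;q)_\infty^2}\cdot
\prod_{\substack{m\ge 1\\ m\not\equiv 0\ (\mathrm{mod}\ p)}}\frac{1}{1-q^m}.
\]
For each fixed $n$, the factor $(q^{a},q^{p-a};q^{p})_\infty/\big((q;q)_\infty^2(1-q^{pn+a})(1-q^{pn+p-a})\big)$ is a reciprocal of an infinite product of terms $(1-q^{m})$, hence has non-negative coefficients; and the sole remaining negative contribution, the factor $(1-q^{p-2a})$, is absorbed by the copy of $1/(1-q^{p-2a})$ sitting inside $\prod_{m\not\equiv 0}\,(1-q^m)^{-1}$ (note $1\le p-2a\le p-1$). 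Thus every summand in $n$ is a product of series with non-negative coefficients, and the paired residue classes have the asserted fixed sign $(-1)^\ell$ without any appeal to quadratic forms. This elementary cancellation is the missing idea in your proposal.
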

The proof is given in Section~\ref{sec:van}.

Our paper is organized as follows: In Section~\ref{sec:proof} we prove
Theorem \ref{mth}, thus establish an asymptotic formula for the Borwein
coefficients $c_p^{(\delta)}(n)$, for any prime $p$.
In Section~\ref{sec:sp} we turn to the $p=3$ case.
We prove Theorem~\ref{mth1} there, which provides us with
a useful estimate for the growth
of the coefficients $c_3^{(\delta)}(n)$.
This allows us to prove Corollary~\ref{cor}.
In Section~\ref{sec:van}, which is of independent interest,
we prove some results that include vanishing and divisibility properties
for the Borwein coefficients of the cube of the infinite Borwein product.
Finally, in Appendix~\ref{sec:app} we present several new conjectures
on precise sign patterns of infinite Borwein products
and other products raised to a real power, which are similar to
Conjecture~\ref{conj1}.

\section{The proof of Theorem \ref{mth}}\label{sec:proof}

Our proof is in two steps.
In the first step we establish a modular transformation for the generating
function $G_{p}(q)^{\delta}$. In the next step we follow Rademacher's
method and use the modular transformation to obtain an expansion
for the Borwein coefficients $c_p^{(\delta)}(n)$.

Recall that $f(q)$ and $G_p(q)$ were defined in
\eqref{eq:f} and \eqref{eq:Gp}, respectively.
Further, we would like to remind the reader about the notation
$\omega_{h,k}$ used for certain exponentials involving Dedekind sums,
see \eqref{om} and \eqref{Dedsum},
which prominently appear in the
Hardy--Ramanujan--Rademacher circle method (see \eqref{part-asym}).

\subsection{Modular transformation for the generating
  function}\label{ssec:mt}
\begin{proposition}\label{prop1}
Let $h,k\in\zb$ such that $k>0$ and $\gcd(h,k)=1$.
Let $d=\gcd(p,k)$, and let $h'$ and $h_d'$ be solutions of the congruences
\begin{align*}
hh'&\equiv 1 ~(\bmod~k)\\
\intertext{and}
(hp/d)h_d'&\equiv 1 ~(\bmod ~{k/d}).
\end{align*}
Then, for all $\delta\in\rb$ and $\Re(z)>0$ we have
\begin{align*}
  G_{p}\Big(e^{\frac{2\pi\ri h}{k}-\frac{2\pi z}{k^2}}\Big)^{\delta}
  =&\Big(\frac{p}{d}\Big)^{\frac{\delta}{2}}
     \Big(\omega_{h,k}^{-1}\omega_{\frac{ph}{d},\frac{k}{d}}\Big)^{\delta}
     \exp\bigg(\frac{\pi\delta(d^2-3)}{36z}-\frac{\pi \delta z}{6k^2}\bigg)
     \hat{G}_{p}\Big(h,k; e^{-\frac{2\pi}{z}}\Big)^{\delta},
\end{align*}
where
$$
\hat{G}_{p}\Big(h,k; e^{-\frac{2\pi}{z}}\Big)
=f\bigg(e^{\frac{2\pi\ri dh_d'}{k}-\frac{2\pi d^2}{pz}}\bigg)
f\Big(e^{\frac{2\pi\ri h'}{k}-\frac{2\pi}{z}}\Big)^{-1}.
$$
\end{proposition}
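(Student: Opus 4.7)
The plan is to apply the classical Rademacher modular transformation of $f$ (equivalently, of the Dedekind eta function) separately to the numerator $f(q^p)$ and the denominator $f(q)$ of $G_p(q)=f(q^p)/f(q)$, take the ratio, and only then raise to the real power $\delta$. The sole analytic input is the standard Rademacher transformation: for $\gcd(h,k)=1$, $k\ge1$, $\Re z>0$, and $h'$ with $hh'\equiv 1\pmod k$,
$$
f\bigl(e^{2\pi\ri h/k-2\pi z/k^2}\bigr)=\omega_{h,k}\sqrt{z/k}\,
 \exp\!\Bigl(\tfrac{\pi}{12z}-\tfrac{\pi z}{12k^2}\Bigr)\,
 f\bigl(e^{2\pi\ri h'/k-2\pi/z}\bigr),
$$
with $\omega_{h,k}=e^{\pi\ri s(h,k)}$; this is classical, see e.g.\ \cite[Chapter~14]{MR0364103}.

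To invoke the same formula for $f(q^p)$, I would first cast $q^p=e^{2\pi\ri hp/k-2\pi pz/k^2}$ into Rademacher normal form. Since $p$ is prime, $d=\gcd(p,k)\in\{1,p\}$, and $\gcd(h,k)=1$ forces $\gcd(hp/d,k/d)=1$. Setting
$$H=hp/d,\qquad K=k/d,\qquad Z=pz/d^2,$$
one has $q^p=e^{2\pi\ri H/K-2\pi Z/K^2}$ with modular inverse $H'=h_d'$ determined by $Hh_d'\equiv 1\pmod K$. Applying the Rademacher transformation to $f(q^p)$ then re-expresses it in terms of $f\bigl(e^{2\pi\ri dh_d'/k-2\pi d^2/(pz)}\bigr)$, as $H'/K=dh_d'/k$ and $1/Z=d^2/(pz)$.

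Dividing the transformed $f(q^p)$ by the transformed $f(q)$ produces three independent simplifications. The multiplier ratio collapses to $\omega_{h,k}^{-1}\omega_{ph/d,k/d}$. The square-root factors combine to $\sqrt{(Z/K)/(z/k)}=\sqrt{p/d}$. The four exponentials combine via $1/Z-1/z=(d^2-p)/(pz)$ and $Z/K^2=pz/k^2$ into the single factor asserted in the statement, while the residual ratio is exactly $\hat G_p(h,k;e^{-2\pi/z})$. Raising the resulting equality to the real power $\delta$ finishes the proof: since $\Re z>0$ keeps $z$, $1/z$, and the unit-modulus numbers $\omega$ away from the branch cut of the principal logarithm, the $\delta$-th power distributes consistently across the product.

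The principal obstacle is bookkeeping rather than analysis. One must (i) check that the single formula parameterised by $d=\gcd(p,k)$ correctly unifies the unramified case $d=1$ (where $q^p$ still sits over a cusp of denominator $k$) and the ramified case $d=p$ (where the cusp descends to denominator $k/p$); (ii) handle the sign conventions for $\omega$ and the Dedekind reciprocity law carefully enough that exactly $\omega_{h,k}^{-1}\omega_{ph/d,k/d}$, and not a close variant, emerges; and (iii) confirm that passage to a real $\delta$-th power is unambiguous, which reduces to checking that no intermediate factor crosses the branch cut of $\log$.
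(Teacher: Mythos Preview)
Your plan is essentially the paper's own: apply the Hardy--Ramanujan--Rademacher transformation to $f(q)$ and to $f(q^p)$ (after rewriting $q^p$ in normal form with $H=hp/d$, $K=k/d$, $Z=pz/d^2$), take the ratio, then raise to the $\delta$-th power. The bookkeeping you describe in the first two paragraphs matches the paper line for line.

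The one place where your justification is too quick is the $\delta$-th power step. Your claim that ``$\Re z>0$ keeps $z$, $1/z$, and the unit-modulus numbers $\omega$ away from the branch cut'' does not imply that $(AB)^\delta=A^\delta B^\delta$: what is needed is $\arg A+\arg B\in[-\pi,\pi)$, not merely that $A$ and $B$ individually avoid the cut. The paper handles this more carefully: it first establishes the $\delta=1$ identity for $z>0$ real, observes that both sides extend to single-valued holomorphic functions on $\Re z>0$, then finds a small open set $\Omega$ on which $\arg\bigl(\omega_{h,k}^{-1}\omega_{ph/d,k/d}\bigr)+\arg\bigl(\hat G_p(h,k;e^{-2\pi/z})\bigr)\in[-\pi,\pi]$ so that the power distributes there, and finally invokes analytic continuation to all of $\Re z>0$. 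Your item (iii) correctly flags this as the point requiring care, but the actual verification should proceed via this local-plus-continuation argument rather than the branch-cut claim you sketched.
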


\begin{proof}
 Notice that the functions occurring in the statement of the proposition
 are all holomorphic on $\Re(z)>0$.
 We just need to show that the transformation holds for all positive real $z$,
then the full transformation follows by analytic continuation.
For $z>0$, from Hardy and Ramanujan \cite[Lemma~4.31]{MR1575586},
we have
\begin{equation*}
  f\Big(e^{\frac{2\pi\ri h}{k}-\frac{2\pi z}{k^2}}\Big)
  =\omega_{h,k}\Big(\frac{z}{k}\Big)^{\frac{1}{2}}
  \exp\bigg(\frac{\pi}{12z}-\frac{\pi z}{12k^2}\bigg)
  f\bigg(e^{\frac{2\pi\ri h'}{k}-\frac{2\pi}{z}}\bigg),
\end{equation*}
where $\omega_{h,k}=e^{\pi\ri s(h,k)}$, and $s(h,k)$ is defined
by \eqref{Dedsum}. Taking into account $d=\gcd(p,k)$, we thus have
\begin{align*}
  G_{p}\Big(e^{\frac{2\pi\ri h}{k}-\frac{2\pi z}{k^2}}\Big)
  &=f\bigg(e^{\frac{2\pi\ri h(p/d)}{k/d}-\frac{2\pi (pz/d^2)}{(k/d)^2}}\bigg)
    f\Big(e^{\frac{2\pi\ri h}{k}-\frac{2\pi z}{k^2}}\Big)^{-1}\\
  &=\omega_{\frac{hp}{d},\frac{k}{d}}\bigg(\frac{pz/d^2}{k/d}\bigg)^{\frac{1}{2}}
    \exp\bigg(\frac{\pi}{12(pz/d^2)}-\frac{\pi (pz/d^2)}{12(k/d)^2}\bigg)
    f\bigg(e^{\frac{2\pi\ri h_d'}{(k/d)}-\frac{2\pi}{pz/d^2}}\bigg)\\*
  &\quad\;\times \omega_{h,k}^{-1}\Big(\frac{z}{k}\Big)^{-\frac{1}{2}}
    \exp\bigg(-\frac{\pi}{12z}+\frac{\pi z}{12k^2}\bigg)
    f\bigg(e^{\frac{2\pi\ri h'}{k}-\frac{2\pi}{z}}\bigg)^{-1}\\
  &=\omega_{h,k}^{-1}\omega_{\frac{hp}{d},\frac{k}{d}}
    \Big(\frac{p}{d}\Big)^{\frac{1}{2}}
    \exp\bigg(\frac{\pi(d^2-p)}{12pz}-\frac{(p-1)\pi z}{12k^2}\bigg)
    \hat{G}_{p}\Big(h,k; e^{-\frac{2\pi}{z}}\Big).
\end{align*}
From this we obtain for all $\delta>0$,
\begin{align*}
  G_{p}\Big(e^{\frac{2\pi\ri h}{k}-\frac{2\pi z}{k^2}}\Big)^{\delta}
  &=\Big(\omega_{h,k}^{-1}\omega_{\frac{hp}{d},\frac{k}{d}}
    \hat{G}_{p}\Big(h,k; e^{-\frac{2\pi}{z}}\Big)\Big)^{\delta}\\*
  &\quad\;\times\Big(\frac{p}{d}\Big)^{\frac{\delta}{2}}
    \exp\bigg(\frac{\delta(d^2-p)\pi}{12pz}
    -\frac{\delta(p-1)\pi z}{12k^2}\bigg).
\end{align*}
It is obvious that
$G_{p}\Big(e^{\frac{2\pi\ri h}{k}-\frac{2\pi z}{k^2}}\Big)^\delta$
can be analytically extended to a single valued analytic function
on the right half plane $\Re(z)>0$. So the function on the right-hand
side of the equation above has the same properties.
Furthermore, one can find a small open set $\Omega$ on the right
half plane $\Re(z)>0$ such that
$$\arg\Big(\omega_{h,k}^{-1}\omega_{\frac{hp}{d},\frac{k}{d}} \Big)
+ \arg\Big(\hat{G}_{p}\Big(h,k; e^{-\frac{2\pi}{z}}\Big)\Big) \in [-\pi,\pi],$$
for all $z\in \Omega$.  This implies that if $z\in \Omega$ then
\begin{align*}
  G_{p}\Big(e^{\frac{2\pi\ri h}{k}-\frac{2\pi z}{k^2}}\Big)^{\delta}
  &=\Big(\omega_{h,k}^{-1}\omega_{\frac{hp}{d},\frac{k}{d}}\Big)^{\delta}
    \hat{G}_{p}\Big(h,k; e^{-\frac{2\pi}{z}}\Big)^{\delta}\\*
  &\quad\;\times\Big(\frac{p}{d}\Big)^{\frac{\delta}{2}}
    \exp\bigg(\frac{\delta(d^2-p)\pi}{12pz}-
    \frac{\delta(p-1)\pi z}{12k^2}\bigg).
\end{align*}
Finally, noticing that $\hat{G}_{p}\Big(h,k; e^{-\frac{2\pi}{z}}\Big)^{\delta}$
can be analytically extended to a single valued analytic function
on the right half plane $\Re(z)>0$, the proof of the proposition is complete.
\end{proof}

\subsection{Rademacher expansion for the Borwein
  coefficients}\label{ssec:re}
%We are ready to complete the proof of Theorem~\ref{mth}.
Let $n,N\in\nb$. Following Rademacher \cite{MR8618} or
\cite[Equation~(117.1)]{MR0364103},
% or Iwaniec and Kowalski \cite[Equation (20.15), Section 20.1]{MR2061214}
we have
\begin{equation}\label{eqm0}
  c_p^{(\delta)}(n)=\sum_{1\le k\le N}\sum_{\substack{0\le h<k\\ \gcd(h,k)=1}}
  \frac{\ri}{k^2}e^{-\frac{2\pi\ri hn}{k}}\int_{z_{h,k}'}^{z_{h,k}''}
  G_{p}\Big(e^{\frac{2\pi\ri h}{k}-\frac{2\pi z}{k^2}}\Big)^{\delta}
  \exp\bigg(\frac{2\pi nz}{k^2}\bigg)\rd z,
\end{equation}
where $z$ runs in each integral on an arc of the circle
$K: \left|z-{1}/{2}\right|={1}/{2}$ with $\Re(z)>0$,
with the ends $z_{h,k}'$ and $z_{h,k}''$
(see the segment in blue in Figure \ref{f1}) of the arc being given by
\begin{equation}\label{eqzhk}
  z_{h,k}'=\frac{k^2}{k^2+k_1^2}+\ri\frac{kk_1}{k^2+k_1^2}
  \quad\mbox{and}\quad
  z_{h,k}''=\frac{k^2}{k^2+k_2^2}-\ri\frac{kk_2}{k^2+k_2^2},
\end{equation}
respectively. Here $k_1,k_2\in\nb$ are taken from the denominators of
adjoint points of $h/k$ in the Farey sequence of order $N$.
%, which determined by the conditions:
%\begin{equation}\label{eqk}
%\begin{cases}N-k<k_1\le N, \quad &hk_1-1\equiv 0 \pmod k,\\
%N-k<k_2\le N, \quad &hk_2+1\equiv 0 \pmod k.
%\end{cases}
%\end{equation}

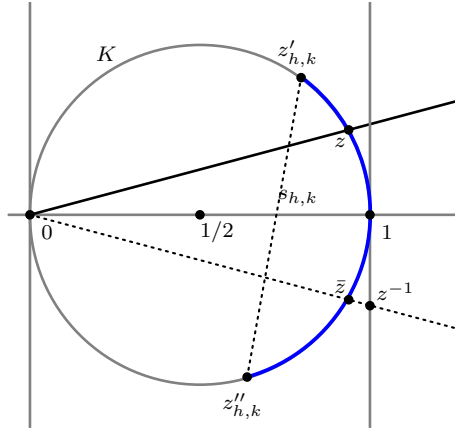
\begin{figure}
%\begin{minipage}
\centering
    \label{zplane}
   % \vspace{5pt}
    \begin{tikzpicture}[line cap=round,line join=round,x=1cm,y=1cm, thick,scale=0.565]
   \clip(-2,-5) rectangle (10,5);
\draw [line width=1pt,color=gray] (-0.5,0)-- (12,0);
\draw [line width=1pt,color=gray] (0,-5)-- (0,5);
\draw [line width=1pt,color=black] (0,0)-- (12,3.2);
\draw [dotted, color=black] (0,0)-- (12,-3.2);
\draw [line width=1pt,color=gray] (8,-5)-- (8,5);
%\draw [line width=1pt,color=gray] (0,0)-- (8.8,4.4);
%\draw [line width=1pt,color=gray] (0,0)-- (8.8,-6.6);
\draw [dotted, color=black]  (5.11,-3.82)-- (6.38,3.23);
\draw [line width=1pt,color=gray] (4,0) circle (4 cm);
%\draw [dotted, line width=1pt,color=gray,domain=-5:10] plot(\x,{5.5511811*\x-32.1865354});
\draw [line width=1.4pt, color=blue] (5.11,-3.82) arc (-74:54:4);
\begin{scriptsize}
\draw [fill=black] (0,0) circle (2.5pt);
\draw[color=black] (0.4,-0.4) node {$0$};
\draw [fill=black] (4,0) circle (2.5pt);
\draw [fill=black] (8,0) circle (2.5pt);
\draw[color=black] (4.4,-0.4) node {${1}/{2}$};
\draw[color=black] (8.4,-0.4) node {$1$};
\draw [fill=black] (6.38,3.23) circle (2.5pt);
\draw[color=black] (6.3,3.8) node {$z_{h, k}'$};
\draw [fill=black] (5.11,-3.82) circle (2.5pt);
\draw[color=black] (5,-4.5) node {$z_{h, k}''$};
\draw[color=black] (6.3,0.4) node {$s_{h, k}$};
\draw [fill=black] (7.5,2) circle (2.5pt);
\draw [fill=black] (7.5,-2) circle (2.5pt);
\draw[color=black] (7.3,1.7) node {$z$};
\draw[color=black] (7.3,-1.7) node {$\bar{z}$};
\draw [fill=black] (8,-2.14) circle (2.5pt);
\draw[color=black] (8.6,-1.8) node {$z^{-1}$};
\draw[color=black] (1.8,3.8) node {$K$};
\end{scriptsize}
\end{tikzpicture}
\caption{Path of integration in the $z$-plane}\label{f1}
%\end{minipage}

\end{figure}

Applying Proposition~\ref{prop1} to Equation~\eqref{eqm0} yields
\begin{align*}
  c_p^{(\delta)}(n)
  &=\Bigg(\sum_{\substack{1\le k\le N\\ p\mid k}}+
  \sum_{\substack{1\le k\le N\\ p\nmid k}}\Bigg)
  \sum_{\substack{0\le h<k\\ \gcd(h,k)=1}}
  \frac{\ri \big(\frac pd\big)^{\frac{\delta}{2}}
  \Big(\omega_{h,k}^{-1}\omega_{\frac{hp}{d},\frac{k}{d}}\Big)^{\delta}
  e^{-\frac{2\pi\ri hn}{k}}}{k^2}\\*
  &\quad\;\times\int_{z_{h,k}'}^{z_{h,k}''}
    e^{\frac{\pi\delta(d^2-p)}{12pz}+\frac{(24n-(p-1)\delta)\pi z}{12k^2}}
    \hat{G}_{p}\Big(h,k; e^{-\frac{2\pi}{z}}\Big)^{\delta}\rd z\\
&=:I+E.
\end{align*}
Notice that for the second sum $E$, the relation $p\nmid k$ means
$d=\gcd(p,k)=1$; we obtain
\begin{align*}
  E=\sum_{\substack{1\le k\le N\\ p\nmid k}}\sum_{\substack{0\le h<k\\ \gcd(h,k)=1}}
  &\frac{\ri\,p^{\frac{\delta}{2}}
    \Big(\omega_{h,k}^{-1}\omega_{ph,k}\Big)^{\delta}
    e^{-\frac{2\pi\ri hn}{k}}}{k^2}\\*
  &\times\int_{z_{h,k}'}^{z_{h,k}''}
    e^{-\frac{\pi\delta(p-1)}{12pz}+\frac{(24n-(p-1)\delta)\pi z}{12k^2}}
    \hat{G}_{p}\Big(h,k; e^{-\frac{2\pi}{z}}\Big)^{\delta}\rd z.
\end{align*}
The path of integration in above inner sum, which is an arc of the circle $K$,
can here be replaced by the chord $s_{h,k}$ from $z_{h,k}'$ to $z_{h,k}''$.
On the chord $s_{h,k}$, from
Rademacher~\cite[Equations~(119.3) and (119.6)]{MR0364103},
we have
$$1\le \Re\bigg(\frac{1}{z}\bigg),\quad 0< \Re(z)<\frac{2k^2}{N^2},$$
and, from Rademacher~\cite[Equation~(3.5)]{MR8618},
the length of the chord $s_{h,k}$ is
$$|s_{h,k}|<\frac{2k}{N+1}.$$
Thus we deduce
\begin{align*}
  |E|&\le \sum_{\substack{1\le k\le N\\ p\nmid k}}
  \sum_{\substack{0\le h<k\\ \gcd(h,k)=1}}
  \frac{ p^{\frac{\delta}{2}}|s_{h,k}|}{k^2}\sup_{z\in s_{h,k}}
  \bigg|e^{-\frac{\pi\delta(p-1)}{12pz}+\frac{(24n-(p-1)\delta)\pi z}{12k^2}}
  \hat{G}_{p}\Big(h,k; e^{-\frac{2\pi}{z}}\Big)^{\delta}\bigg|\\
     &\le \sum_{\substack{1\le k\le N\\ p\nmid k}}
  \sum_{\substack{0\le h<k\\ \gcd(h,k)=1}}\frac{ 2\cdot p^{\frac{\delta}{2}}
  e^{-\frac{\pi\delta(p-1)}{12p}+\frac{(24n-(p-1)\delta)\pi }{6N^2}}}{k(N+1)}
  \sup_{z\in s_{h,k}}
  \left|\frac{f\bigg(e^{\frac{2\pi\ri h_1'}{k}-\frac{2\pi }{pz}}\bigg)}
  {f\Big(e^{\frac{2\pi\ri h'}{k}-\frac{2\pi}{z}}\Big)}\right|^{\delta}.
\end{align*}
Further, it is not difficult to see that
$$\sup_{z\in s_{h,k}}\left|f\bigg(e^{\frac{2\pi\ri h_1'}{k}-\frac{2\pi }{pz}}\bigg)
  f\Big(e^{\frac{2\pi\ri h'}{k}-\frac{2\pi}{z}}\Big)^{-1}\right|\le
f\Big(e^{-\frac{2\pi }{p}}\Big)f\Big(e^{-2\pi}\Big),$$
and hence
\begin{align}\label{eqE}
|E|
  \le  \sum_{\substack{1\le k\le N\\ p\nmid k}}
  \sum_{\substack{0\le h<k\\ \gcd(h,k)=1}}\frac{ 2\cdot p^{\frac{\delta}{2}}
  e^{-\frac{\pi\delta(p-1)}{12p}+\frac{(24n-(p-1)\delta)\pi }{6N^2}}}{k(N+1)}
  f\Big(e^{-\frac{2\pi }{p}}\Big)f\Big(e^{-2\pi}\Big).
\end{align}

For the first sum $I$, the relation $p\mid k$ means $d=\gcd(p,k)=p$;
we now obtain the estimate
\begin{align*}
  I&=\sum_{\substack{1\le k\le N\\ p\mid k}}
  \sum_{\substack{0\le h<k\\ \gcd(h,k)=1}}
  \frac{\ri \Big(\omega_{h,k}^{-1}\omega_{h,\frac{k}{p}}\Big)^{\delta}
  e^{-\frac{2\pi\ri hn}{k}}}{k^2}\\*
  &\qquad\times \int_{z_{h,k}'}^{z_{h,k}''}
    e^{\frac{\pi(p-1)\delta}{12z}+\frac{(24n-(p-1)\delta)\pi z}{12k^2}}
    \bigg(1+\Big(\hat{G}_{p}
    \Big(h,k; e^{-\frac{2\pi}{z}}\Big)^{\delta}-1\Big)\bigg)
    \rd z\\
&=:I_M+I_R,
\end{align*}
where the division of $I$ into the two terms $I_M$ and $I_R$ comes
from splitting the last factor in the integrand into $1$ and
$\Big(\hat{G}_{p}\Big(h,k; e^{-\frac{2\pi}{z}}\Big)^{\delta}-1\Big)$.
Similar to the estimate for $E$, for $I_R$ we have
\begin{align*}
  |I_R|\le \sum_{\substack{1\le k\le N\\ p\mid k}}
  &\qquad\sum_{\substack{0\le h<k\\ \gcd(h,k)=1}}
  \frac{2e^{\frac{(24n-(p-1)\delta)\pi z}{6N^2}}}{k(N+1)}
  \sup_{z\in s_{h,k}}\bigg|e^{\frac{\pi(p-1)\delta}{12z}}
  \Big(\hat{G}_{p}\Big(h,k; e^{-\frac{2\pi}{z}}\Big)^{\delta}-1\Big)\bigg|.
\end{align*}
For all $t_1,t_2\in\cb$ with $|t_1|, |t_2|<1$ and $\delta>0$,
it not difficult to show that
$$\bigg|\Big(f(t_1)f(t_2)^{-1}\Big)^{\delta}-1\bigg|
\le f(|t_1|)^{\delta}f(|t_2|)^{\delta}-1,$$
by using the definition of $f(t)$. Hence we obtain
\begin{align*}
  &\sup_{z\in s_{h,k}}
  \bigg|e^{\frac{\pi\delta(p-1)}{12pz}}
  \Big(\hat{G}_{p,\delta}\Big(h,k; e^{-\frac{2\pi}{z}}\Big)-1\Big)\bigg|\\
  &\le \sup_{z\in s_{h,k}}\bigg|e^{\frac{\pi\delta(p-1)}{12}\Re(z^{-1})}
    \Big(f\Big(e^{-2p\pi \Re(z^{-1})}\Big)^{\delta}
    f\Big(e^{-2\pi\Re(z^{-1})}\Big)^{\delta}-1\Big)\bigg|\\
  &\le e^{\frac{\pi\delta(p-1)}{12}}\bigg(f\Big(e^{-2p\pi }\Big)^{\delta}
    f\Big(e^{-2\pi}\Big)^{\delta}-1\bigg),
\end{align*}
by using $\delta\in(0,24/(p-1)]$
and the definition of $\hat{G}_{p}(h,k; e^{-\frac{2\pi}{z}})$. Therefore
\begin{align}\label{eqIR}
  |I_R|\le \sum_{\substack{1\le k\le N\\ p\mid k}}
  \sum_{\substack{0\le h<k\\ \gcd(h,k)=1}}
  \frac{ 2e^{\frac{\pi\delta(p-1)}{12}+\frac{(24n-(p-1)\delta)\pi }{6N^2}}}{k(N+1)}
  \bigg(f\Big(e^{-2p\pi }\Big)^{\delta}
  f\Big(e^{-2\pi}\Big)^{\delta}-1\bigg).
\end{align}

To evaluate $I_M$ we split integral into two parts, $I_{MM}$ and
$I_{ME}$, as indicated below; the path of integration
of the first part is the whole circle $K$, traversed
from $0$ to $0$ in negative direction, while in the second part
the arc of the circle is traversed from $z_{h,k}'$ to $z_{h,k}''$
(which itself is split into a difference of two integrals
with respective paths of integration along the arc starting in $0$).
Specifically, we have
\begin{align*}
  I_M&=\sum_{\substack{1\le k\le N\\ p\mid k}}
  \sum_{\substack{0\le h<k\\ \gcd(h,k)=1}}
  \frac{\ri \Big(\omega_{h,k}^{-1}\omega_{h,\frac{k}{p}}\Big)^{\delta}
  e^{-\frac{2\pi\ri hn}{k}}}{k^2}\\*
     &\qquad\times
       \Bigg(\int_{K}-\Bigg(\int_{0}^{z_{h,k}'}-\int_{0}^{z_{h,k}''}\Bigg)\Bigg)
       e^{\frac{\pi\delta(p-1)}{12z}+\frac{(24n-(p-1)\delta)\pi z}{12k^2}}\rd z\\
       &=:I_{MM}+I_{ME}.
\end{align*}

We estimate the second part, $I_{ME}$, first.
On the arc from $0$ to $z_{h,k}'$,
and the arc from $0$ to $z_{h,k}''$, from
Rademacher~\cite[Equation~(119.6)]{MR0364103}
and  \cite[Equation~(120.2)]{MR0364103}, we have
 $$0\le \Re(z)\le \max(\Re(z_{h,k}'),\Re(z_{h,k}'')) <\frac{2k^2}{N^2},$$
 and $\Re(1/z)=1$.
 The lengths of both, the arc from $0$ to $z_{h,k}'$,
 and the arc from $0$ to $z_{h,k}''$, are less than
 $$\frac{\pi}{2}|z_{h,k}'|, \;\frac{\pi}{2}|z_{h,k}''|
 \le \frac{\pi k}{\sqrt{2}N}.$$
Together, we obtain
\begin{equation}\label{eqIME}
  |I_{ME}|\le 2\sum_{\substack{1\le k\le N\\ p\mid k}}
  \sum_{\substack{0\le h<k\\ \gcd(h,k)=1}}
  \frac{1}{k^2}\frac{\pi k}{\sqrt{2}N}
  e^{\frac{\pi\delta(p-1)}{12}+\frac{(24n-(p-1)\delta)\pi }{6N^2}}.
\end{equation}

Finally, for the first part
which is
$$I_{MM}=\sum_{\substack{1\le k\le N\\ p\mid k}}
\sum_{\substack{0\le h<k\\ \gcd(h,k)=1}}
\frac{\ri \Big(\omega_{h,k}^{-1}\omega_{h,\frac{k}{p}}\Big)^{\delta}
  e^{-\frac{2\pi\ri hn}{k}}}{k^2}\int_{K}
e^{\frac{\pi\delta(p-1)}{12z}+\frac{(24n-(p-1)\delta)\pi z}{12k^2}}\rd z,$$
we use the integral representation for the modified
Bessel function $I_1$ in \eqref{eq:intrpI1},
and apply the substitution $w=1/z$.
Simplification then gives
\begin{equation}\label{eqIMM}
  I_{MM}=\frac{2\pi \delta^{1/2}}{\sqrt{\frac{24n}{p-1}-\delta}}
  \sum_{\substack{1\le k\le N\\ p\mid k}}A_{k,\delta}(n)
  I_1\Bigg(\frac{(p-1)\pi}{6k}
    \sqrt{\delta\bigg(\frac{24n}{p-1}-\delta\bigg)}\Bigg),
\end{equation}
where
$$A_{k,\delta}(n)=\frac{1}{k}\sum_{\substack{0\le h<k\\ \gcd(h,k)=1}}
\Big(\omega_{h,k}^{-1}\omega_{h,\frac{k}{p}}\Big)^{\delta}
e^{-\frac{2\pi\ri hn}{k}}.$$
From Equations \eqref{eqE}, \eqref{eqIR}, \eqref{eqIME} and \eqref{eqIMM},
we therefore obtain
\begin{align*}
&\big|c_p^{(\delta)}(n)-I_{MM}\big|\\
&\le |I_{ME}|+|I_{R}|+|E|\\
  &\le  \frac{e^{\frac{(p-1)\pi\delta}{12}+\frac{(24n-(p-1)\delta)\pi }{6N^2}} }{N}
    \sum_{\substack{1\le k\le N\\ p\mid k}}
  \frac{\varphi(k)}{k}\bigg(\pi\sqrt{2}
  +2\bigg(f\Big(e^{-2p\pi }\Big)^{\delta}
  f\Big(e^{-2\pi}\Big)^{\delta}-1\bigg)\bigg)\\*
  &\quad\;+ \sum_{\substack{1\le k\le N\\ 3\nmid k}}
  \sum_{\substack{0\le h<k\\ \gcd(h,k)=1}}
  \frac{ 2\cdot p^{\frac{\delta}{2}}
  e^{-\frac{\pi(p-1)\delta}{12p}+\frac{(24n-(p-1)\delta)\pi }{6N^2}}}{k(N+1)}
  f\Big(e^{-\frac{2\pi }{p}}\Big)^{\delta}f\Big(e^{-2\pi}\Big)^{\delta},
\end{align*}
where $\varphi(k)$ is Euler's totient function.
Notice that if $p\mid k$ then $\varphi(k)\le (1-1/p)k$.
Substituting $N\mapsto pN$, the error term becomes
\begin{align*}
  &\big|c_p^{(\delta)}(n)-I_{MM}\big|\\
  &\le \frac{(p-1)e^{\frac{(p-1)\pi\delta}{12}+
    \frac{(24n-(p-1)\delta)\pi }{6(pN)^2}} }{p^2}
    \bigg(\pi\sqrt{2}+2\bigg(f\Big(e^{-6\pi }\Big)^{\delta}
    f\Big(e^{-2\pi}\Big)^{\delta}-1\bigg)\bigg)\\*
  &\quad\;+ \frac{ 2(p-1)\cdot
    e^{-\frac{\pi(p-1)\delta}{12p}+\frac{(24n-(p-1)\delta)\pi }{6(pN)^2}}}
    {p^{1-\delta/2}}f\Big(e^{-\frac{2\pi }{p}}\Big)^{\delta}
    f\Big(e^{-2\pi}\Big)^{\delta}\\
  &= e^{\frac{(24n-(p-1)\delta)\pi }{6p^2N^2}}
    \Bigg(\frac{(p-1)e^{\frac{(p-1)\pi\delta}{12}} }{p^2}
    \bigg(\pi\sqrt{2}-2+2f\Big(e^{-6\pi }\Big)^{\delta}
    f\Big(e^{-2\pi}\Big)^{\delta}\bigg)\\*
  &\quad\; + \frac{ 2(p-1)\cdot e^{-\frac{\pi(p-1)\delta}{12p}}}{p^{1-\delta/2}}
    f\Big(e^{-\frac{2\pi }{p}}\Big)^{\delta}
    f\Big(e^{-2\pi}\Big)^{\delta}\Bigg).
\end{align*}
This completes the proof of Theorem~\ref{mth}.\qed

\section{Sign pattern for the Borwein coefficients with
  $p=3$}\label{sec:sp}
\subsection{The proof of Theorem \ref{mth1}}
We now assume that $\delta\in(0, 3]$, and start with $n\ge 1$
and will later restrict to $n\ge 158$.
We let
\begin{equation}\label{eqld}
L_{\delta,n}=\frac{\pi}{18}\sqrt{\delta(12n-\delta)},\quad
\hat{c}_{\delta}(n)=\frac{2\pi^2\delta}{27} L_{\delta,n}^{-1}\,
I_1\big(2L_{\delta,n}\big)
\end{equation}
(remember that  $I_1$ is the modified Bessel function,
defined in \eqref{eqbs}),
and fix
$$N= \Big\lceil\big(20 L_{\delta,n}^2/\delta\big)^{1/2}\Big\rceil.$$

Substituting these into Theorem~\ref{mth} and taking $p=3$,
using computer algebra (we utilized \emph{Mathematica} and found it
convenient to rewrite the occurrences of $f(\cdot)$ using
$f(e^{-2\pi x})=e^{2\pi x/24}\eta(\ri x)^{-1}$ where
$\eta(x)$ is the classical Dedekind eta function,
already implemented as a built-in function in \emph{Mathematica}),
we find that
\begin{align}\label{eqedl}
 \Big|E_{3,N}^{(\delta)}(n)\Big|\le \frac{4}{9}\cdot e^{\frac{3}{5\pi}}
  \Big(1.689^{\delta}\big(1.222+1.002^{\delta}\big)+3\cdot 1.692^{\delta}\Big)
\end{align}
for any integer $n\ge 1$.
Furthermore, by noticing that $\varphi(3k)\le 2k$, we have
\begin{align}\label{eqcd}
  &\bigg|c_{\delta}(n)-\hat{c}_{\delta}(n)
    \cos\bigg(\frac{\pi\delta}{18}+\frac{2\pi n}{3}\bigg)\bigg|\nonumber\\
  &\qquad\qquad\le \Big|E_{3,N}^{(\delta)}(n)\Big|+
    \frac{2\pi \delta^{1/2}}{\sqrt{12n-\delta}}
    \sum_{2\le k\le N}\frac{\varphi(3k)}{3k}\,
    I_1\Big(\frac{\pi}{9k}\sqrt{\delta(12n-\delta)}\Big)\nonumber\\
  &\qquad\qquad\le \Big|E_{3,N}^{(\delta)}(n)\Big|+
  \frac{2\pi^2\delta}{27L_{\delta, n}}
    \sum_{2\le k\le N}I_1\bigg(\frac{2}{k}L_{\delta,n}\bigg).
\end{align}
To give an upper bound for the above sum we require the following lemma.
\begin{lemma}\label{lem:e}
  For any real $x>0$ and integer $y>2$ we have
  $$\sum_{2\le k\le y}I_1\bigg(\frac{2x}{k}\bigg)\le
  x\log y+2I_1\left(x\right)-\bigg(2-\gamma-\frac 1{2y}\bigg)x.$$
Here $\gamma=0.577216\ldots$ is the Euler--Mascheroni constant.
\end{lemma}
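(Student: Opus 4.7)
The plan is to expand $I_1(2x/k)$ as its defining power series and isolate the leading $n=0$ term, since that is the only term whose sum over $k$ diverges. Using \eqref{eqbs},
$$I_1(2x/k) = \frac{x}{k} + \sum_{n\ge 1}\frac{1}{n!(n+1)!}\left(\frac{x}{k}\right)^{2n+1},$$
and summing $2\le k\le y$ yields $x(H_y-1)$ from the $n=0$ part, where $H_y=\sum_{k=1}^y 1/k$. The classical bound $H_y\le \log y+\gamma+\frac{1}{2y}$ immediately gives
$$\sum_{k=2}^y \frac{x}{k} \le x\log y+\Big(\gamma-1+\frac{1}{2y}\Big)x,$$
which already accounts for the $x\log y$ and the $-(2-\gamma-\frac{1}{2y})x$ piece, \emph{provided} the tail $n\ge 1$ can be controlled by $2I_1(x)$.

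The second step is to show the tail bound
$$\sum_{k=2}^{y}\sum_{n\ge 1}\frac{1}{n!(n+1)!}\Big(\frac{x}{k}\Big)^{2n+1}\le 2I_1(x)-x.$$
Interchanging sums and using $\sum_{k=2}^y k^{-(2n+1)}\le \zeta(2n+1)-1$, this reduces, after comparing with the series expansion $2I_1(x)-x=\sum_{n\ge 1}\frac{x^{2n+1}}{n!(n+1)!\,4^n}$, to the inequality
$$\zeta(2n+1)-1\le 4^{-n}\qquad(n\ge 1).$$
I would establish this numerical comparison by splitting off the $k=2$ term and integral–testing the rest: $\sum_{k\ge 3}k^{-(2n+1)}\le \int_2^\infty x^{-(2n+1)}\,dx = \frac{2^{-2n}}{2n}$, so that
$$\zeta(2n+1)-1\le 2^{-(2n+1)}+\frac{2^{-2n}}{2n}=2^{-2n}\Big(\tfrac12+\tfrac{1}{2n}\Big)\le 2^{-2n},$$
where the final step is exactly the condition $n\ge 1$.

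Combining the two steps and absorbing $-x$ into $2I_1(x)-x$ at the very end yields the stated bound. The only non-cosmetic obstacle is the verification $\zeta(2n+1)-1\le 4^{-n}$; everything else is a rearrangement of power series and the standard Euler–Maclaurin bound for $H_y$. The hypothesis $y>2$ is used only to ensure that the sum is non-trivial so that the $n=0$ harmonic contribution actually appears.
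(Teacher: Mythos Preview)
Your proof is correct and follows essentially the same route as the paper: expand $I_1(2x/k)$ in its power series, separate the $n=0$ harmonic contribution and bound it by $H_y\le\log y+\gamma+\tfrac{1}{2y}$, then compare the $n\ge 1$ tail termwise with the series for $2I_1(x)-x$. The paper writes the tail bound in one line without justifying the coefficient inequality $\sum_{2\le k\le y}k^{-(2n+1)}\le 4^{-n}$; your explicit verification of $\zeta(2n+1)-1\le 4^{-n}$ via the integral test fills in that step. (Your closing remark about the role of the hypothesis $y>2$ is inessential: the argument goes through for $y=2$ as well.)
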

\begin{proof}Using the well-known bound
$$\sum_{1\le k\le y}\frac{1}{k}\le \log y+\gamma+\frac{1}{2y},$$
we find that
\begin{align*}
  \sum_{2\le k\le y}I_1\left(\frac{2x}{k}\right)
  &=\sum_{n\ge 0}\frac{x^{2n+1}}{n!(n+1)!}\sum_{2\le k\le y}\frac{1}{k^{2n+1}}\\
  &=x\sum_{2\le k\le y}\frac{1}{k}+
    \sum_{n\ge 1}\frac{x^{2n+1}}{n!(n+1)!}\sum_{2\le k\le y}\frac{1}{k^{2n+1}}\\
  &\le x\bigg(\log y+\gamma-1+\frac{1}{2y}\bigg)+
    2\Bigg(\sum_{n\ge 0}\frac{(x/2)^{2n+1}}{n!(n+1)!}-\frac{x}{2}\Bigg)\\
&\le x\log y+2I_1\left(x\right)-\bigg(2-\gamma-\frac 1{2y}\bigg)x,
\end{align*}
which completes the proof.
\end{proof}
Now, applying Lemma~\ref{lem:e} to \eqref{eqcd}, we have
\begin{align*}
  &\bigg|c_{\delta}(n)-\hat{c}_{\delta}(n) \cos\bigg(\frac{\pi\delta}{18}
    +\frac{2\pi n}{3}\bigg)\bigg|\\
  &\le \Big|E_{3,N}^{(\delta)}(n)\Big|+\frac{2\pi^2\delta}{27L_{\delta, n}}
    \big(L_{\delta,n}\log N+2I_1(L_{\delta,n})
    -(2-\gamma-(2N)^{-1})L_{\delta,n}\big).
\end{align*}
Now we let $n\ge 158$. Since
$$
20 L_{\delta,n}^2/\delta=\frac{20\pi^2(12n-\delta)}{18^2}\ge
\frac{20\pi^2\cdot (12\cdot 158-3)}{18^2}\ge 1153,
$$
we have
$$
N=\Big\lceil(20 L_{\delta,n}^2/\delta)^{1/2}\Big\rceil\le
\big(1+1153^{-1/2}\big)\big(20 L_{\delta,n}^2/\delta\big)^{1/2}.
$$
If we also insert the definition of $\hat{c}_{\delta}(n)$ we obtain that
\begin{align*}
  &\bigg|\frac{c_{\delta}(n)}{\hat{c}_{\delta}(n)}-
    \cos\bigg(\frac{\pi\delta}{18}+\frac{2\pi n}{3}\bigg)\bigg|\\
  &\le \frac{L_{\delta,n}\log L_{\delta,n}+2 I_1(L_{\delta,n})}
    {I_1(2L_{\delta,n})}\\*
  &\quad\;+\frac{\Big|E_{3,N}^{(\delta)}(n)\Big|+\frac{2\pi^2\delta}{27}
    \Big(\log \big(\big(1+1153^{-1/2}\big)(20/\delta)^{1/2}\big)
    -\big(2-\gamma-1153^{-1/2}/2\big)\Big)}
    {\frac{2\pi^2\delta}{27} L_{\delta,n}^{-1}\,I_1(2L_{\delta,n})}.
\end{align*}
Using \emph{Mathematica} and inserting \eqref{eqedl}, we obtain
\begin{align*}
  \left|\frac{c_{\delta}(n)}{\hat{c}_{\delta}(n)}-
  \cos\left(\frac{\pi\delta}{18}+\frac{2\pi n}{3}\right)\right|
  \le \frac{L_{\delta,n} w(\delta)+L_{\delta,n}\log L_{\delta,n}+
  2 I_1(L_{\delta,n})}{I_1\left(2L_{\delta,n}\right)},
\end{align*}
with
\begin{equation}\label{eqwd}
w(\delta)=\frac{1}{2}\log\bigg(\frac{1}{\delta}\bigg)+
\frac{0.736\,\big(1.689^{\delta}\big(1.222+1.002^{\delta}\big)+
    3\cdot 1.692^{\delta}\big)}{\delta}+0.119.
\end{equation}
This completes the proof of Theorem \ref{mth1}.\qed

\subsection{The proof of Corollary \ref{cor}}
To establish Corollary \ref{cor}, we will make use of the following lemma.
\begin{lemma}\label{lem32}Let $L_{\delta,n}$ and $w(\delta)$ be given as
  in \eqref{eqld} and \eqref{eqwd}, respectively.
  For each $\delta \in (0, 3]$, define
  $$M(L_{\delta,n}):=\frac{L_{\delta,n} w(\delta)+L_{\delta,n}\log L_{\delta,n}
    +2 I_1(L_{\delta,n})}{I_1\left(2L_{\delta,n}\right)}.$$
  Then $M(L_{\delta,n})$ is decreasing in $n$ for $n\ge 158$,
  whenever $\delta \in [0.227, 3]$.
\end{lemma}
\begin{proof}We have $L_{\delta,n}=(\pi/18)\sqrt{\delta(12n-\delta)}\ge 3.6$
  for all $\delta\in[0.227,3]$ and $n\ge 158$, and $L_{\delta,n}$ is increasing
  for all $n\ge 1$. Hence we just need to prove that
 $$M(u):=\frac{u w(\delta)+u\log u+2 I_1(u)}{I_1\left(2u\right)},$$
 is decreasing for all $u\ge 3$. We have $w(\delta)>0$ for all
 $\delta\in [0.227, 3]$.
 Using the definition of $I_1(u)$ in \eqref{eqbs}, it is clear that
 $uw(\delta)/I_1(2u)$ is decreasing for $u>0$. Also,
\begin{align*}
  \bigg(\frac{u\log u}{I_1(2u)}\bigg)'=
  \frac{u^{-1}(u^{-1}I_1(2u))-(u^{-1}I_1(2u))'\log u}{(u^{-1}I_1(2u))^2},
\end{align*}
and
\begin{align*}
  u^{-1}(u^{-1}I_1(2u))-(u^{-1}I_1(2u))'\log u
  &=\sum_{\ell\ge 0}\frac{u^{2\ell-1}}{\ell !(\ell+1)!}-
    \log u \sum_{\ell\ge 0}\frac{2\ell u^{2\ell-1}}{\ell !(\ell+1)!}\\
  &=\frac{1}{u}-
    \sum_{\ell\ge 1}\frac{(2\ell\log u-1) u^{2\ell-1}}{\ell !(\ell+1)!}\\
&\le \frac{1}{u}-\frac{u}{2}(2\log u-1),
\end{align*}
for all $u\ge \sqrt{e}$.
Clearly, $1/u-u(2\log u-1)/2$ is decreasing and not greater than
$$\frac{1}{3}-\frac{3}{2}(2\log 3-1)<0,$$
for $u\ge 3$. This means that ${u\log u}/{I_1(2u)}$ is decreasing for $u\ge 3$.

The more difficult part is to prove that $I_1(u)/I_1(2u)$ is
decreasing for $u\ge 3$.
We shall prove
\begin{align}\label{eqbin}
  \bigg(\frac{I_1(u)}{I_1(2u)}\bigg)'
  =\frac{I_1'(u)I_1(2u)-2I_1(u)I_1'(2u)}{I_1(2u)^2}\le 0,
\end{align}
for all $u>0$. Inserting the well-known functional relation for the
modified {B}essel function $I_1$, namely $I_1'(u)=I_0(u)-u^{-1}I_1(u)$,
into the above equation, we obtain
\begin{align*}
I_1'(u)I_1(2u)-2I_1(u)I_1'(2u)&=I_0(u)I_1(2u)-2I_0(2u)I_1(u)\\
&=\frac{1}{u}\bigg(\frac{uI_0(u)}{I_{1}(u)}-\frac{2uI_0(2u)}{I_{1}(2u)}\bigg).
\end{align*}
By using a result of Simpson and Spector~\cite{MR736509} on the monotonicity
of the ratios of modified {B}essel functions $uI_v(u)/I_{v+1}(u), (v\ge 0)$,
namely, that for all $v\ge 0$, $uI_v(u)/I_{v+1}(u)$ is strictly monotone
decreasing on $(0, \infty)$, we arrive at the inequality in \eqref{eqbin}.
\end{proof}

We shall prove for all $n\ge 158$ that
$$\bigg|\cos\bigg(\frac{\pi\delta}{18}+\frac{2\pi n}{3}\bigg)\bigg|>
M(L_{\delta, n}).$$
From this and Theorem \ref{mth1} we see that $c_{\delta}(n)$
has the same sign as $\cos\Big(\frac{\pi\delta}{18}+\frac{2\pi n}{3}\Big)$,
and hence the proof of Corollary \ref{cor} follows.
This is because $\cos(\pi\delta/18+2\pi n/3)$ is a periodic function in $n$ of
period $3$, and because of Lemma \ref{lem32} above.
We just need to prove that
$$\bigg|\cos\bigg(\frac{\pi\delta}{18}+\frac{2\pi j}{3}\bigg)\bigg|>
M(L_{\delta, 158})$$
holds for all $j\in\{0,1,2\}$ and $\delta\in[0.227, 2.9999]$.
This can be verified by \emph{Mathematica}.

\section{Arithmetic properties
  of the cubic Borwein coefficients}\label{sec:van}

For convenience, we use standard $q$-series notation (cf.\ \cite{GR2004}).
For $a\in\cb$ and $0<|q|<1$, let
$$(a;q)_\infty:=\prod_{j=0}^\infty(1-a q^j),$$
and
$$
(a_1,\ldots,a_m;q)_\infty:=(a_1;q)_\infty\cdots(a_m;q)_\infty.
$$
Further let the modified Jacobi theta function be defined by
$$\theta(z;q):=(z,q/z;q)_\infty=\frac 1{(q;q)_\infty}\sum_{n\in\zb}
(-1)^n q^{\frac{n(n-1)}2}z^n,$$
where the last equation is equivalent to Jacobi's triple product
identity~\cite[Equation~(1.6.1)]{GR2004}.

In this section we are interested in arithmetic properties
(including sign patterns, vanishing and divisibility properties)
of the Borwein coefficients for exponent $\delta=3$, i.e. for
$$c_k^{(3)}(n):=\big[q^n\big]\frac{(q;q)_\infty^3}{(q^k;q^k)_\infty^3}$$
where $k>1$ is an integer,
which we shall refer to as
\emph{cubic} Borwein coefficients. (Here we relax the condition
that $k$ is a prime, which we assumed in the earlier sections.
Nevertheless, in relevant cases,
$k$ will be assumed to be odd, or even a prime.)

We deduce the arithmetic properties we are interested in from
the following result.
\begin{theorem}\label{main}
If $k$ is a positive even integer then
\begin{align*}
(q;q)_{\infty}^3&=
  \sum_{0\le \ell< \frac{k-1}{2}}(-1)^{\ell}q^{\frac{\ell(\ell+1)}{2}}
    \left(-q^{\frac{k(k-1-2\ell)}{2}},-q^{\frac{k(k+1+2\ell)}{2}},q^{k^2};
    q^{k^2}\right)_{\infty}\\*
  &\quad\;\times\left(2\ell+1-2k\sum_{n\ge 0}
    \left(\frac{q^{k(kn+\frac{k-1-2\ell}{2})}}
    {1+q^{k(kn+\frac{k-1-2\ell}{2})}}-
    \frac{q^{k(kn+\frac{k+1+2\ell}{2})}}
    {1+q^{k(kn+\frac{k+1+2\ell}{2})}}\right)\right).
\end{align*}
If $k$ is a positive odd integer then
\begin{align*}
  (q;q)_{\infty}^3&-
  (-1)^{\frac{k-1}{2}}kq^{\frac{k^2-1}{8}}(q^{k^2};q^{k^2})_{\infty}^3\\
  &=\sum_{0\le \ell< \frac{k-1}{2}}(-1)^{\ell}q^{\frac{\ell(\ell+1)}{2}}
     \left(q^{\frac{k(k-1-2\ell)}{2}},q^{\frac{k(k+1+2\ell)}{2}},q^{k^2};
     q^{k^2}\right)_{\infty}\\*
  &\quad\;\times\left(2\ell+1+2k\sum_{n\ge 0}
    \left(\frac{q^{k(kn+\frac{k-1-2\ell}{2})}}
    {1-q^{k(kn+\frac{k-1-2\ell}{2})}}-
    \frac{q^{k(kn+\frac{k+1+2\ell}{2})}}
    {1-q^{k(kn+\frac{k+1+2\ell}{2})}}\right)\right).
\end{align*}
\end{theorem}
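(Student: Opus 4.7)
The starting point is Jacobi's classical identity
\begin{equation*}
(q;q)_\infty^3=\sum_{n\ge 0}(-1)^n(2n+1)q^{n(n+1)/2}.
\end{equation*}
Since the summand is invariant under $n\mapsto -n-1$, it symmetrizes to $(q;q)_\infty^3=\tfrac12\sum_{n\in\zb}(-1)^n(2n+1)q^{n(n+1)/2}$. My plan is to dissect this bilateral sum by the residue of $n$ modulo $k$. Writing $n=km+\ell$ with $\ell\in\{0,\dots,k-1\}$ and $m\in\zb$, the exponent splits cleanly as $n(n+1)/2=\ell(\ell+1)/2+km(2\ell+1)/2+k^2m^2/2$, and splitting the linear factor $2n+1=(2\ell+1)+2km$ produces
\begin{equation*}
(q;q)_\infty^3=\frac12\sum_{\ell=0}^{k-1}(-1)^\ell q^{\ell(\ell+1)/2}\bigl[(2\ell+1)\,T_\ell(q)+2k\,S_\ell(q)\bigr],
\end{equation*}
where $T_\ell(q):=\sum_{m\in\zb}(-1)^{km}q^{k^2m^2/2+km(2\ell+1)/2}$ and $S_\ell(q)$ is the same sum with an extra factor of $m$ inserted in the summand.

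Next I would evaluate $T_\ell$ in closed form via Jacobi's triple product. For $k$ even, $(-1)^{km}=1$ and one obtains $T_\ell=(-q^{k(k-1-2\ell)/2},-q^{k(k+1+2\ell)/2},q^{k^2};q^{k^2})_\infty$; for $k$ odd, $(-1)^{km}=(-1)^m$ and one obtains $T_\ell=(q^{k(k-1-2\ell)/2},q^{k(k+1+2\ell)/2},q^{k^2};q^{k^2})_\infty$. For the ratio $S_\ell/T_\ell$, observe that $T_\ell$ viewed as a function of the parameter $B=k(2\ell+1)/2$ in the exponent satisfies $\partial T_\ell/\partial B=(\log q)\,S_\ell$; taking the logarithmic derivative in $B$ of the triple-product form for $T_\ell$ converts $S_\ell/T_\ell$ into an explicit Lambert-type series, with denominators of the form $1+q^{\cdot}$ in the even case and $1-q^{\cdot}$ in the odd case. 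Up to overall sign and the factor $2k$ this is exactly the parenthesized Lambert sum appearing in Theorem \ref{main}.

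The next step is to collapse the sum over $\ell$ from $\{0,\dots,k-1\}$ down to $\{0,\dots,\lfloor(k-3)/2\rfloor\}$ (plus, in the odd case, the fixed point $(k-1)/2$) via the involution $\ell\leftrightarrow\ell':=k-1-\ell$. A direct computation, either by applying $m\mapsto-m-1$ in the series for $T_{\ell'},S_{\ell'}$, or via $q$-Pochhammer shifts in the triple product, yields relations of the form $T_{\ell'}=\pm q^{-k(k-1-2\ell)/2}T_\ell$ and $S_{\ell'}=\mp q^{-k(k-1-2\ell)/2}(S_\ell+T_\ell)$, with signs depending on the parity of $k$. Combined with the arithmetic identities $\ell'(\ell'+1)/2=\ell(\ell+1)/2+k(k-1-2\ell)/2$ and $(-1)^{\ell'}=(-1)^{k-1}(-1)^\ell$, the signs conspire so that in both parities the contributions from $\ell$ and $\ell'$ to the bracketed expression are equal; the factor $1/2$ therefore cancels and the effective range becomes $0\le\ell<(k-1)/2$.

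The main delicate point, and where I expect most of the subtlety to lie, is the behavior at the fixed point $\ell=(k-1)/2$ of the involution, which occurs only when $k$ is odd. Here the factor $(1;q^{k^2})_\infty$ forces $T_{(k-1)/2}=0$, so the Lambert expression for $S_\ell/T_\ell$ is singular and cannot be used directly; however $S_{(k-1)/2}=\sum_{m\in\zb}m(-1)^m q^{k^2m(m+1)/2}$ is itself well-defined. Symmetrizing once more via $m\mapsto-m-1$ gives $2\,S_{(k-1)/2}=\sum_{m\in\zb}(2m+1)(-1)^m q^{k^2m(m+1)/2}$, which by a second invocation of Jacobi's identity (with $q$ replaced by $q^{k^2}$) equals $2(q^{k^2};q^{k^2})_\infty^3$. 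The isolated $\ell=(k-1)/2$ term thus contributes exactly $(-1)^{(k-1)/2}k\,q^{(k^2-1)/8}(q^{k^2};q^{k^2})_\infty^3$, and moving it to the left-hand side produces the cubic correction appearing in the odd-$k$ assertion of Theorem \ref{main}.
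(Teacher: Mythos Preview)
Your proof is correct and follows essentially the same strategy as the paper's, though the packaging is a bit different. The paper introduces the theta function $\theta(z;q)$, proves a $k$-dissection identity for it (Proposition~\ref{prop:jtpe}), and then differentiates in $z=e^{-x}$ at $x=0$; the Lambert series arise as the logarithmic derivative of each theta factor (Lemma~\ref{lem:der}), and the singular middle term for odd $k$ is handled by the special value~\eqref{eq:dth}. You instead start directly from Jacobi's series identity $(q;q)_\infty^3=\sum(-1)^n(2n+1)q^{n(n+1)/2}$, dissect that series modulo $k$, and only afterwards invoke the triple product and its logarithmic derivative to repackage $T_\ell$ and $S_\ell$. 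In effect you have swapped the order of ``dissect'' and ``differentiate'': the paper dissects the theta product and then differentiates, while you begin with the already-differentiated series and dissect it. Your folding via the involution $\ell\leftrightarrow k-1-\ell$ matches the paper's implicit pairing of the symmetric index range in Proposition~\ref{prop:jtpe}, and your treatment of the fixed point $\ell=(k-1)/2$ (recognizing $S_{(k-1)/2}=(q^{k^2};q^{k^2})_\infty^3$ via a second use of Jacobi) is exactly the paper's use of~\eqref{eq:dth} with $q\mapsto q^{k^2}$. Your version is slightly more hands-on and series-oriented, avoiding the theta-function notation; the paper's version makes the mechanism (differentiation of a modular object) more visible. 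Both are complete.
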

Notice that the right-hand sides of the two identities above are finite sums
whose terms involve Jacobi triple products and Lambert series.
The following corollaries are direct consequences of Theorem~\ref{main}
whose proof we give at the end of this section.

Choosing $k=2$ in Theorem~\ref{main}, we have
\begin{align*}
  (q;q)_{\infty}^3=\left(-q,-q^3,q^{4};q^{4}\right)_{\infty}\left(1-4\sum_{n\ge 0}
  \left(\frac{q^{4n+1}}{1+q^{4n+1}}-\frac{q^{4n+3}}{1+q^{4n+3}}\right)\right).
\end{align*}
Replacing $q$ by $-q$, we get
\begin{align*}
  \frac{(-q;-q)_{\infty}^3}{\left(q,q^3,q^{4};q^{4}\right)_{\infty}}
  =1+4\sum_{n\ge 0}\left(\frac{q^{4n+1}}{1-q^{4n+1}}-
  \frac{q^{4n+3}}{1-q^{4n+3}}\right).
\end{align*}
After simplification and an application of Jacobi's triple
product identity we obtain the following classical result
(cf.\ \cite[Eq.~(3.2.8)]{B2006}).
\begin{corollary}[Sum of two squares theorem]
  For $k=2$ we have
$$
\bigg(\sum_{n=-\infty}^\infty q^{n^2}\bigg)^2
=(-q,-q,q^2;q^2)_{\infty}^2=
1+4\sum_{n\ge 0}\bigg(\frac{q^{4n+1}}{1-q^{4n+1}}-
  \frac{q^{4n+3}}{1-q^{4n+3}}\bigg).
$$
\end{corollary}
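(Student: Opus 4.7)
The plan is to exploit the specialisation $k=2$ of Theorem~\ref{main}, which the excerpt has already partially carried out, and then identify the resulting product $(-q;-q)_\infty^{3}/(q,q^{3},q^{4};q^{4})_\infty$ with the Jacobi theta square $(-q,-q,q^{2};q^{2})_\infty^{2}$ via elementary infinite-product manipulations, and finally apply Jacobi's triple product identity to rewrite that square as $\bigl(\sum_{n\in\zb}q^{n^{2}}\bigr)^{2}$.

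First, I would set $k=2$ in Theorem~\ref{main} to obtain the finite form
\begin{equation*}
(q;q)_\infty^{3}=(-q,-q^{3},q^{4};q^{4})_\infty
\left(1-4\sum_{n\ge 0}\Bigl(\tfrac{q^{4n+1}}{1+q^{4n+1}}-\tfrac{q^{4n+3}}{1+q^{4n+3}}\Bigr)\right),
\end{equation*}
exactly as shown in the excerpt. Substituting $q\mapsto -q$ turns the $1+q^{4n+j}$ denominators into $1-q^{4n+j}$ and turns the left-hand side into $(-q;-q)_\infty^{3}$; isolating that side gives the Lambert-series identity
\begin{equation*}
\frac{(-q;-q)_\infty^{3}}{(q,q^{3},q^{4};q^{4})_\infty}
=1+4\sum_{n\ge 0}\Bigl(\tfrac{q^{4n+1}}{1-q^{4n+1}}-\tfrac{q^{4n+3}}{1-q^{4n+3}}\Bigr).
\end{equation*}
This is already the right-hand side of the corollary, so the remaining task is purely to match products.

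Next I would simplify the left-hand product. Using $(-q;-q)_\infty=(q^{2};q^{2})_\infty(-q;q^{2})_\infty$ (split by parity of the index) and $(q,q^{3},q^{4};q^{4})_\infty=(q;q^{2})_\infty(q^{4};q^{4})_\infty$, together with the two standard reductions $(q;q^{2})_\infty(-q;q^{2})_\infty=(q^{2};q^{4})_\infty$ (from $(1-q^{2k-1})(1+q^{2k-1})=1-q^{4k-2}$) and $(q^{2};q^{2})_\infty=(q^{2};q^{4})_\infty(q^{4};q^{4})_\infty$, one finds that all factors telescope to leave
\begin{equation*}
\frac{(-q;-q)_\infty^{3}}{(q,q^{3},q^{4};q^{4})_\infty}
=(-q;q^{2})_\infty^{4}(q^{2};q^{2})_\infty^{2}
=(-q,-q,q^{2};q^{2})_\infty^{2}.
\end{equation*}

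Finally, Jacobi's triple product identity in the form stated in the excerpt, applied with $z=-q$ and base $q^{2}$, gives $\sum_{n\in\zb}q^{n^{2}}=(-q,-q,q^{2};q^{2})_\infty$, so squaring yields the middle equality of the corollary. This closes the chain of identities. There is no substantive obstacle here; the entire argument is a routine assembly, with the one mild care-point being to track parity correctly in the $q\mapsto -q$ step and in the decomposition of $(-q;-q)_\infty$.
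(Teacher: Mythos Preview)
Your proof is correct and follows exactly the route the paper takes: specialise Theorem~\ref{main} at $k=2$, substitute $q\mapsto -q$, simplify the resulting infinite product, and invoke Jacobi's triple product identity. You have simply filled in the details of the product manipulation that the paper summarises as ``after simplification,'' and your verification $\frac{(-q;-q)_\infty^{3}}{(q,q^{3},q^{4};q^{4})_\infty}=(-q,-q,q^{2};q^{2})_\infty^{2}$ is accurate.
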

Similarly, choosing $k=3$ in Theorem~\ref{main}, we readily obtain the
following result which can be interpreted as an identity for the
cubic theta functions of the Borwein brothers~\cite{BB1991}.
\begin{corollary}[A cubic theta function addition formula]\label{cor:c-th}
For $k=3$ we have
\begin{equation}\label{eq:c-th}
\frac{(q,q)_{\infty}^3+3q(q^9;q^9)_{\infty}^3}{(q^3;q^3)_{\infty}}=
1+6\sum_{n\ge 0}\bigg(\frac{q^{9n+3}}{1-q^{9n+3}}-
\frac{q^{9n+6}}{1-q^{9n+6}}\bigg).
\end{equation}
\end{corollary}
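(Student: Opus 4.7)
The plan is to specialize the odd-$k$ identity of Theorem~\ref{main} at $k=3$ and then simplify. For $k=3$ the summation range $0\le\ell<(k-1)/2=1$ collapses to the single index $\ell=0$, so the right-hand side of Theorem~\ref{main} consists of a single term. This is precisely what makes the $k=3$ specialization elegant: no non-trivial cancellation among competing triple products is required.

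Substituting $\ell=0$ and $k=3$ into the identity, the prefactor $(-1)^\ell q^{\ell(\ell+1)/2}$ equals $1$; the triple product factor becomes $(q^{3},q^{6},q^{9};q^{9})_\infty$; the coefficient $2\ell+1$ equals $1$; and the Lambert-type sum specializes to
$$\sum_{n\ge 0}\bigg(\frac{q^{9n+3}}{1-q^{9n+3}}-\frac{q^{9n+6}}{1-q^{9n+6}}\bigg).$$
On the left-hand side, the correction term evaluates as $-(-1)^{(3-1)/2}\cdot 3\cdot q^{(9-1)/8}(q^{9};q^{9})_\infty^3 = 3q(q^{9};q^{9})_\infty^3$. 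After combining, Theorem~\ref{main} therefore reads
$$(q;q)_\infty^3+3q(q^9;q^9)_\infty^3=(q^3,q^6,q^9;q^9)_\infty\bigg(1+6\sum_{n\ge 0}\bigg(\frac{q^{9n+3}}{1-q^{9n+3}}-\frac{q^{9n+6}}{1-q^{9n+6}}\bigg)\bigg).$$

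The final step is the elementary product rearrangement
$$(q^3,q^6,q^9;q^9)_\infty=(q^3;q^3)_\infty,$$
which holds because the three arithmetic progressions of exponents $\{3,12,21,\ldots\}$, $\{6,15,24,\ldots\}$, and $\{9,18,27,\ldots\}$ together cover precisely the positive multiples of $3$. Dividing both sides of the previous display by $(q^3;q^3)_\infty$ delivers the stated corollary. There is no real obstacle in this argument: the entire content of the proof is already packaged in Theorem~\ref{main}, and the only step beyond plain substitution is the triple-product-to-single-product identification above.
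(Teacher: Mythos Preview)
Your argument is correct and is exactly what the paper does: it states that the corollary is obtained by ``choosing $k=3$ in Theorem~\ref{main}'', and your substitution $\ell=0$, $k=3$ together with the identification $(q^3,q^6,q^9;q^9)_\infty=(q^3;q^3)_\infty$ is precisely the required computation.
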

The connection of Corollary~\ref{cor:c-th} to the cubic theta functions
is as follows: For
$$
L(q):=\sum_{n,m=-\infty}^\infty q^{n^2+nm+m^2}
$$
the Borweins, in \cite[p.~695]{BB1991} defined the following
three cubic analogues of Jacobi theta functions,
\begin{equation}\label{eq:abc}
a(q):=L(q),\quad\; b(q):=\big[3L(q)^3-L(q)\big]/2,\quad\;
c(q):=\big[L(q^{1/3}-L(q)\big]/2.
\end{equation}
(Explicit series representations for $a(q)$, $b(q)$ and $c(q)$
are conveniently listed in \cite[Equations (1.6)--(1.8)]{BBG1994}.)
Now the Lambert series for $L(q)$ (thus for $a(q)$) is
\begin{equation}\label{eq:a}
L(q)=1+6\sum_{n\ge 0}\bigg(\frac{q^{3n+1}}{1-q^{3n+1}}-
\frac{q^{3n+2}}{1-q^{3n+2}}\bigg),
\end{equation}
which is originally due to Lorenz~\cite[p.~11]{L1871}.
See \cite[p.~43]{BBG1994} for a discussion on the history of \eqref{eq:a}
including alternative proofs.
While a central result in the theory of Borweins' cubic theta functions
is the cubic identity
\cite[Equation~(2.3)]{BB1991}
$$
a(q)^3=b(q)^3+c(q)^3,
$$
many other identities that connect the three cubic theta functions $a(q)$,
$b(q)$, $c(q)$ exist in addition, including (cf.\ \cite[Equation~(3.30)]{C2017})
\begin{equation}\label{eq:abc2}
a(q^3)=b(q)+c(q^3),
\end{equation}
which is immediate from the defining relations \eqref{eq:abc}.
Now since (cf.\ \cite[Proposition~2.2]{BBG1994})
\begin{equation}
b(q)=\frac{(q;q)_\infty^3}{(q^3;q^3)_\infty},\quad\text{and}\quad
c(q)=3q^{\frac 13}\frac{(q^3;q^3)_\infty^3}{(q;q)_\infty},
\end{equation}
it is clear that \eqref{eq:c-th} is nothing else than \eqref{eq:abc2}
in explicit terms.

In the case that $k$ is an odd positive integer, the Lambert series
appearing in the statement of Theorem~\ref{main} contain only non-zero
coefficients of powers of $q$ whose exponents are multiples of $k$.
The triple product however has the prefactor $q^{\ell(\ell+1)/2}$
which is of relevance. Since $\ell(\ell+1)/2\equiv h\pmod k$ is
equivalent to $(2\ell+1)^2\equiv 1+8h\pmod k$,
the following corollary is immediate.
\begin{corollary}\label{cor13}
  Let $k$ be an odd positive integer and $h$ be a non-negative integer
  less than $k$ such that
  $$\big|\big\{\ell\!\!\!\pmod k: (2\ell+1)^2\equiv
  1+8h\!\!\!\pmod k\big\}\big|=0.$$
Then
$$
c_{k}^{(3)}(kn+h)=0,\qquad\text{for all $n\in\nb_0$.}
$$
\end{corollary}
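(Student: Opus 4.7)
The plan is to read off the corollary by extracting coefficients from the odd-$k$ identity of Theorem~\ref{main} after dividing through by $(q^k;q^k)_\infty^3$. First, I would observe that each summand indexed by $\ell$ on the right-hand side of Theorem~\ref{main} has the form $(-1)^\ell q^{\ell(\ell+1)/2} S_\ell(q^k)$ for some formal power series $S_\ell$ in one variable: in the Jacobi triple product $(q^{k(k-1-2\ell)/2},q^{k(k+1+2\ell)/2},q^{k^2};q^{k^2})_\infty$ and in the Lambert-series factor, every exponent in $q$ is a positive integer multiple of $k$ (the half-integers $(k\pm 1\pm 2\ell)/2$ are integers because $k$ is odd, and they are multiplied by $k$). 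Moreover the ``correction'' term $(-1)^{(k-1)/2}kq^{(k^2-1)/8}(q^{k^2};q^{k^2})_\infty^3$ on the left is itself of this shape for the index $\ell=(k-1)/2$, since $\ell(\ell+1)/2=(k^2-1)/8$ at that value.

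Next, I would move the correction term to the right-hand side and divide through by $(q^k;q^k)_\infty^3$, rewriting Theorem~\ref{main} as
\begin{equation*}
\frac{(q;q)_\infty^3}{(q^k;q^k)_\infty^3}
=\sum_{\ell=0}^{(k-1)/2}(-1)^\ell q^{\ell(\ell+1)/2}T_\ell(q^k),
\end{equation*}
with each $T_\ell(x):=S_\ell(x)/(x;x)_\infty^3$ still a formal power series in its argument. Extracting the coefficient of $q^{kn+h}$ on both sides gives $c_k^{(3)}(kn+h)$ on the left, while on the right only those $\ell$ with $\ell(\ell+1)/2\equiv h\pmod{k}$ can contribute.

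The final step is the combinatorial translation. The identity $(2\ell+1)^2=1+8\cdot\ell(\ell+1)/2$ together with $\gcd(8,k)=1$ shows that $\ell(\ell+1)/2\equiv h\pmod{k}$ is equivalent to $(2\ell+1)^2\equiv 1+8h\pmod{k}$. Moreover, since $k$ is odd, the substitution $\ell\mapsto k-1-\ell$ on residues modulo $k$ preserves $\ell(\ell+1)/2$ (the difference works out to $k(k-1)/2-k\ell$, which is divisible by $k$), so the image of $\ell(\ell+1)/2\bmod k$ over $\ell\in\{0,1,\ldots,(k-1)/2\}$ already coincides with its image over all residues $\ell\pmod{k}$. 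Hence the hypothesis that no $\ell\pmod{k}$ satisfies $(2\ell+1)^2\equiv 1+8h\pmod{k}$ kills every summand on the right in that residue class, forcing $c_k^{(3)}(kn+h)=0$ for all $n\ge 0$.

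The proof is essentially a bookkeeping exercise once one accepts Theorem~\ref{main}; the only step deserving an explicit line of care is the structural claim that the right-hand side of Theorem~\ref{main} decomposes as $\sum_\ell q^{\ell(\ell+1)/2}$ times a power series in $q^k$, which is immediate by inspection of the exponents appearing in the triple products and in the Lambert sums. I do not anticipate any genuine obstacle.
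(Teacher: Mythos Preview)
Your proposal is correct and follows essentially the same route as the paper: both arguments observe that, for odd $k$, each summand in Theorem~\ref{main} (together with the correction term at $\ell=(k-1)/2$) is $q^{\ell(\ell+1)/2}$ times a power series in $q^k$, divide by $(q^k;q^k)_\infty^3$, and use the equivalence $\ell(\ell+1)/2\equiv h\pmod k \iff (2\ell+1)^2\equiv 1+8h\pmod k$. Your added remark about the involution $\ell\mapsto k-1-\ell$ is a nice clarification, though strictly unnecessary here since the hypothesis already rules out \emph{all} residues $\ell\pmod k$, hence a fortiori those in $\{0,\ldots,(k-1)/2\}$.
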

In particular, for the cases $k=3,5,7,9$, we have
\begin{align*}
c_3^{(3)}(3n+2)&=0,\\
c_5^{(3)}(5n+2)&=c_5^{(3)}(5n+4)=0,\\
c_7^{(3)}(7n+2)&=c_7^{(3)}(7n+4)=c_7^{(3)}(7n+5)=0,\\
c_9^{(3)}(9n+2)&=c_9^{(3)}(9n+4)=c_9^{(3)}(9n+5)=c_9^{(3)}(9n+7)
=c_9^{(3)}(9n+8)=0,
\end{align*}
for all $n\in\nb_0$.

We now show that for any odd prime $p$ the cubic Borwein coefficients have
a sign pattern of period $p$, as stated in Theorem~\ref{thm:a3}.
\begin{proof}[Proof of Theorem~\ref{thm:a3}]
Notice that if $p$ is an odd prime and $0\le \ell_1<\ell_2< \frac{p-1}{2}$ then
$$\frac{\ell_1(\ell_1+1)}{2}\not\equiv \frac{\ell_2(\ell_2+1)}{2}\pmod p,$$
and for all $0\le \ell< \frac{p-1}{2}$ the expression
\begin{align*}
  \frac{(q^p,q^{\ell},q^{p-\ell};q^p)_{\infty}}{(q;q)_{\infty}^3}
  &\sum_{n\ge 0}\left(\frac{q^{pn+\ell}}{1-q^{pn+\ell}}
    -\frac{q^{pn+p-\ell}}{1-q^{pn+p-\ell}}\right)\\
  &=\frac{(q^p,q^{\ell},q^{p-\ell};q^p)_{\infty}}
    {(q;q)_{\infty}^3}\sum_{n\ge 0}\frac{q^{pn+\ell}(1-q^{p-2\ell})}
    {(1-q^{pn+\ell})(1-q^{pn+p-\ell})}\\
  &=\frac{(q^{\ell},q^{p-\ell};q^p)_{\infty}}{(q;q)_{\infty}^2}
    \sum_{n\ge 0}\frac{q^{pn+\ell}(1-q^{p-2\ell})}
    {(1-q^{pn+\ell})(1-q^{pn+p-\ell})}
    \prod_{\substack{n\ge 1\\ n\not\equiv 0\pmod p}}\frac{1}{1-q^n}
\end{align*}
is in $\nb[[q]]$, i.e., a power series in $q$ with positive coefficients.
Replacing $q$ by $q^p$ and $\ell$ by $(p-1-2\ell)/2$, the above expression
is in $\nb[[q^p]]$. Also notice that
for all odd positive integers $k$, and all integers $0\le\ell<\frac{k-1}{2}$,
$$\frac{k^2-1}{8}-\frac{\ell(\ell+1)}{2}\not\equiv 0\pmod k.$$
Theorem~\ref{thm:a3} now readily follows from Theorem~\ref{main}.
\end{proof}
As a by-product of the above proof, due to the appearance of the factor
$kq^{\frac{k^2-1}8}$ (which trivially is divisible by $k$) in the
second formula in Theorem~\ref{main}, we have the following result:
\begin{corollary}
Let $k$ be an odd positive integer. Then
$$c_k^{(3)}\bigg(kn+\frac{k^2-1}{8}\bigg)\equiv 0\pmod k,
\qquad\text{for all $n\in\nb_0$.}$$
\end{corollary}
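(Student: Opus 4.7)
The plan is to apply the second (odd-$k$) identity in Theorem~\ref{main} to the generating function $\sum_n c_k^{(3)}(n)\,q^n=(q;q)_\infty^3/(q^k;q^k)_\infty^3$, essentially repeating the strategy used in the proof of Theorem~\ref{thm:a3} above but this time tracking the $q$-adic coefficient at the specific arithmetic progression $kn+(k^2-1)/8$. First I would note that each Jacobi triple product and each Lambert-type sum appearing on the right-hand side of that identity is a power series in $q^k$ (since $k$ odd makes $(k\pm 1\pm 2\ell)/2$ an integer). Dividing both sides by $(q^k;q^k)_\infty^3$---itself a power series in $q^k$ with integer coefficients---therefore yields a decomposition
\begin{equation*}
\sum_{n\ge 0}c_k^{(3)}(n)\,q^n
=(-1)^{\frac{k-1}{2}}k\,q^{\frac{k^2-1}{8}}A(q^k)
+\sum_{0\le\ell<\frac{k-1}{2}}(-1)^\ell q^{\frac{\ell(\ell+1)}{2}}B_\ell(q^k)\bigl((2\ell+1)+2kL_\ell(q^k)\bigr),
\end{equation*}
where $A$, $B_\ell$ and $L_\ell$ are power series in a single variable with integer coefficients.

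Next I would extract the coefficient of $q^{kn+(k^2-1)/8}$ on both sides and reduce modulo $k$. The first term on the right contributes exactly $(-1)^{(k-1)/2}k\cdot[q^{kn}]A(q^k)$, trivially a multiple of $k$---this is precisely the ``by-product'' observation that the factor $kq^{(k^2-1)/8}$ in Theorem~\ref{main} supplies. Inside each summand of the finite sum, the piece $2kL_\ell(q^k)$ is also divisible by $k$, so modulo $k$ that summand collapses to $(-1)^\ell(2\ell+1)\,q^{\ell(\ell+1)/2}B_\ell(q^k)$. Since $B_\ell(q^k)$ lives in $q^k$, only those $\ell$ with $\ell(\ell+1)/2\equiv(k^2-1)/8\pmod k$ can possibly affect the coefficient in question. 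Multiplying this congruence through by $8$ (legitimate since $\gcd(8,k)=1$) shows it is equivalent to
\begin{equation*}
(2\ell+1)^2\equiv 0\pmod k.
\end{equation*}

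In the easy and principal case of $k$ prime, $(2\ell+1)^2\equiv 0\pmod k$ forces $k\mid 2\ell+1$, contradicting $1\le 2\ell+1\le k-2$ for $0\le\ell<(k-1)/2$; so no $\ell$ survives, the finite sum contributes $0$ modulo $k$, and the corollary follows immediately from the first term. The same argument in fact handles every squarefree odd $k$. The main obstacle is the case of non-squarefree odd $k$ (e.g.\ $k=9$, where $\ell=1$ satisfies $(2\ell+1)^2=9\equiv 0$); there one must extract an extra mod-$k$ divisibility from the coefficient $[q^{kn+(k^2-1)/8-\ell(\ell+1)/2}]B_\ell(q^k)$ itself to compensate for the solitary factor $(2\ell+1)$. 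This additional divisibility can be harvested from classical mod-$k$ congruences among the cubic theta functions $a(q)$, $b(q)$, $c(q)$ that sit inside $B_\ell$; for example, for $k=9$ one invokes $b(q)\equiv a(q^3)\pmod 3$ to conclude $1/b(q)\equiv 1\pmod 3$ on positive powers, supplying the missing factor of $3$.
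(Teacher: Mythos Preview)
Your approach coincides with the paper's: divide the odd-$k$ identity of Theorem~\ref{main} by $(q^k;q^k)_\infty^3$ and read off the residue class $(k^2-1)/8\bmod k$ modulo~$k$. In fact you are more scrupulous than the paper. Inside the proof of Theorem~\ref{thm:a3} the paper asserts that
\[
\frac{k^2-1}{8}-\frac{\ell(\ell+1)}{2}\not\equiv 0\pmod k
\]
for \emph{all} odd positive integers $k$ and all $0\le\ell<(k-1)/2$, and then records the corollary as a one-line by-product of that observation together with the explicit factor $kq^{(k^2-1)/8}$. That assertion is false for non-squarefree $k$---your example $k=9$, $\ell=1$ gives $10-1=9\equiv 0\pmod 9$---so the paper's deduction is in fact complete only in the squarefree case, exactly as yours is.

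Your patch for $k=9$ is correct: there the offending series is
\[
B_1(q)=\frac{(q^3,q^6,q^9;q^9)_\infty}{(q;q)_\infty^3}
=\frac{(q^3;q^3)_\infty}{(q;q)_\infty^3}=\frac{1}{b(q)},
\]
and the freshman's-dream congruence $(q;q)_\infty^3\equiv(q^3;q^3)_\infty\pmod 3$ gives $B_1\equiv 1\pmod 3$, supplying the missing factor of~$3$. But the general non-squarefree case is not carried out. Saying the extra divisibility ``can be harvested from classical mod-$k$ congruences among the cubic theta functions'' is an assertion, not an argument; for arbitrary odd $k$ one would need, for every $\ell$ with $k\mid(2\ell+1)^2$, that the positive-degree coefficients of $B_\ell$ are divisible by $k/\gcd(k,2\ell+1)$, and this has to be proved. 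So your proposal is complete exactly where the paper's is (squarefree $k$, in particular all primes), and incomplete in the same place---though you at least flag the gap while the paper does not.
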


Before we prove Theorem~\ref{main}, we give a proposition and a lemma.
\begin{proposition}\label{prop:jtpe}
For each $k\in\nb$ we have
$$
\theta(z;q)=\frac{\big(q^{k^2};q^{k^2}\big)_{\infty}}{(q;q)_{\infty}}
\sum_{\left\lceil\frac{1-k}{2}\right\rceil\le \ell\le
  \left\lceil\frac{k-1}{2}\right\rceil}
(-1)^{\ell}q^{\frac{\ell(\ell-1)}{2}}z^{\ell }
\theta\Big((-1)^{k-1}z^{k}q^{\frac{k(k-1+2\ell)}{2}};q^{k^2}\Big).
$$
\end{proposition}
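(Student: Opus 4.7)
The plan is to dissect Jacobi's triple product identity
$$
(q;q)_\infty\,\theta(z;q)=\sum_{n\in\zb}(-1)^n q^{\frac{n(n-1)}{2}}z^n
$$
according to the residue class of $n$ modulo $k$. Writing $n=km+\ell$ with $m$ running over $\zb$ and $\ell$ running over the complete set of residues
$\lceil(1-k)/2\rceil\le\ell\le\lceil(k-1)/2\rceil$, I would split the sum into $k$ inner sums indexed by $m$, each carrying the prefactor $(-1)^\ell q^{\ell(\ell-1)/2}z^\ell$.

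The key algebraic step is the identity
$$
\frac{n(n-1)}{2}=\frac{(km+\ell)(km+\ell-1)}{2}=\frac{k^2m(m-1)}{2}+\frac{km(k-1)}{2}+km\ell+\frac{\ell(\ell-1)}{2},
$$
which cleanly separates the $\ell$-dependent prefactor from an inner sum whose only dependence on $m$ through $q$-powers is of the form $q^{k^2 m(m-1)/2}$ (the right base for a theta function in $q^{k^2}$) together with a linear contribution $q^{km(k-1)/2+km\ell}$ that combines with $z^{km}$. After pulling out the prefactor, the inner sum becomes
$$
\sum_{m\in\zb}(-1)^{km}\,q^{\frac{k^2m(m-1)}{2}}\bigl(q^{\frac{k(k-1+2\ell)}{2}}z^{k}\bigr)^{m}.
$$

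The only bookkeeping that needs care is the sign $(-1)^{km}=((-1)^k)^m$. If $k$ is odd this is $(-1)^m$ and the inner sum is exactly $(q^{k^2};q^{k^2})_\infty\,\theta(W;q^{k^2})$ with $W=q^{k(k-1+2\ell)/2}z^k$. If $k$ is even the sign disappears, and replacing $W$ by $-W$ in $\theta(\,\cdot\,;q^{k^2})$ restores the Jacobi form, giving $(q^{k^2};q^{k^2})_\infty\,\theta(-W;q^{k^2})$. Both cases are encoded uniformly by the argument $(-1)^{k-1}W$, producing the factor $\theta((-1)^{k-1}z^{k}q^{k(k-1+2\ell)/2};q^{k^2})$ in the proposition. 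Dividing both sides by $(q;q)_\infty$ and multiplying by $(q^{k^2};q^{k^2})_\infty/(q;q)_\infty$ appropriately then yields the stated formula.

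No serious obstacle is anticipated: the only subtlety is the sign bookkeeping to obtain the uniform $(-1)^{k-1}$ factor, and verifying that the chosen range of $\ell$ forms a complete residue system modulo $k$ so that the dissection is exhaustive and disjoint. Once these are handled, the identity follows by directly comparing coefficients of $z^\ell$ on both sides after the substitution.
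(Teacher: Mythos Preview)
Your proposal is correct and follows essentially the same approach as the paper's own proof: both start from Jacobi's triple product identity, dissect the bilateral sum according to the residue class of the summation index modulo $k$, and then recognize each inner sum as a theta function in base $q^{k^2}$ with the sign $(-1)^{km}$ absorbed via the argument $(-1)^{k-1}z^k q^{k(k-1+2\ell)/2}$. The paper's proof is just a slightly more compressed version of exactly the computation you outline.
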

\begin{proof}By Jacobi's triple product identity and
dissection of the sum into residue classes modulo $k$, we have
\begin{align*}
\theta(z;q)&=\frac{1}{(q;q)_{\infty}}
             \sum_{\left\lceil\frac{1-k}{2}\right\rceil\le \ell\le
             \left\lceil\frac{k-1}{2}\right\rceil}
             (-1)^{\ell}q^{\frac{\ell(\ell-1)}{2}}z^{\ell}
             \sum_{n\in\zb}(-1)^{kn}
             q^{k^2\frac{n(n-1)}{2}+\frac{nk(k-1)}{2}+kn\ell}z^{kn}\\
           &=\frac{\big(q^{k^2};q^{k^2}\big)_{\infty}}
             {(q;q)_{\infty}}\sum_{\left\lceil\frac{1-k}{2}\right\rceil\le
             \ell\le \left\lceil\frac{k-1}{2}\right\rceil}
(-1)^{\ell}q^{\frac{\ell(\ell-1)}{2}}z^{\ell }
\theta\Big((-1)^{k-1}z^{k}q^{\frac{k(k-1+2\ell)}{2}};q^{k^2}\Big),
\end{align*}
which completes the proof.
\end{proof}
\begin{lemma}\label{lem:der}Let $k\in\zb$ and $\alpha\in(0,1)$.
\begin{align*}
&\frac{\rd}{\rd x}\bigg|_{x=0}\theta\big((-1)^kq^{\alpha}e^{-x};q\big)\\
  &=\theta\big((-1)^kq^{\alpha};q\big)
    \sum_{n\ge 0}\bigg(\frac{(-1)^kq^{n+\alpha}}{1-(-1)^kq^{n+\alpha}}
    -\frac{(-1)^kq^{n+1-\alpha}}{1-(-1)^kq^{n+1-\alpha}}\bigg).
\end{align*}
\end{lemma}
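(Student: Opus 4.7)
The plan is to apply logarithmic differentiation to the Jacobi triple product expansion $\theta(z;q)=(z,q/z;q)_\infty$. Substituting $z=(-1)^kq^{\alpha}e^{-x}$ and using $(-1)^{-k}=(-1)^k$, the factor $q/z$ becomes $(-1)^kq^{1-\alpha}e^{x}$, so
\[
\theta\big((-1)^kq^{\alpha}e^{-x};q\big)=\prod_{j\ge 0}\bigl(1-(-1)^kq^{j+\alpha}e^{-x}\bigr)\bigl(1-(-1)^kq^{j+1-\alpha}e^{x}\bigr).
\]
For $|q|<1$ and $\alpha\in(0,1)$ every factor on the right is nonzero at $x=0$ (and in a complex neighbourhood thereof, excluding at most a discrete set of $q$-values in the odd-$k$ case), so the infinite product converges uniformly and holomorphically in $x$ near $0$, and term-by-term logarithmic differentiation is legitimate.

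Using the elementary identities $\tfrac{\rd}{\rd x}\log(1-ae^{-x})={ae^{-x}}/({1-ae^{-x}})$ and $\tfrac{\rd}{\rd x}\log(1-be^{x})=-{be^{x}}/({1-be^{x}})$, summation over $j\ge 0$ would give
\[
\frac{\rd}{\rd x}\log\theta\big((-1)^kq^{\alpha}e^{-x};q\big)=\sum_{j\ge 0}\Biggl[\frac{(-1)^kq^{j+\alpha}e^{-x}}{1-(-1)^kq^{j+\alpha}e^{-x}}-\frac{(-1)^kq^{j+1-\alpha}e^{x}}{1-(-1)^kq^{j+1-\alpha}e^{x}}\Biggr].
\]
Evaluating at $x=0$ and multiplying through by $\theta((-1)^kq^{\alpha};q)$ yields the claimed identity verbatim (after renaming the summation index $j$ to $n$). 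There is no genuine obstacle here; the only subtle point is justifying the interchange of differentiation with the infinite product, which is covered by the zero-freeness noted above, together with the fact that the resulting Lambert-type series converges absolutely for $|q|<1$.
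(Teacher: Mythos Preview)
Your proof is correct and follows essentially the same route as the paper: logarithmic differentiation of the product $\theta(z;q)=(z,q/z;q)_\infty$ after the substitution $z=(-1)^kq^{\alpha}e^{-x}$, followed by evaluation at $x=0$. The paper's argument is slightly terser (it omits the convergence justification you supply), but the computation is identical.
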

\begin{proof}We compute
\begin{align*}
&\frac{\rd}{\rd x}\bigg|_{x=0}\frac{\theta\big((-1)^kq^{\alpha}e^{-x};q\big)}
{\theta\big((-1)^kq^{\alpha};q\big)}\\
&=\frac{\rd}{\rd x}\bigg|_{x=0}\log\theta\big((-1)^kq^{\alpha}e^{-x};q\big)\\
&=\sum_{n\ge 0}\frac{\rd}{\rd x}\bigg|_{x=0}\log
\Big(\big(1-(-1)^kq^{n+\alpha}e^{-x}\big)
\big(1-(-1)^kq^{n+1-\alpha}e^{x}\big)\Big)\\
  &=\sum_{n\ge 0}\bigg(\frac{(-1)^kq^{n+\alpha}}
    {1-(-1)^kq^{n+\alpha}}-\frac{(-1)^kq^{n+1-\alpha}}
    {1-(-1)^kq^{n+1-\alpha}}\bigg),
\end{align*}
which completes the proof.
\end{proof}
For $k=0$ and $\alpha\to 1^-$ we get from Lemma~\ref{lem:der}
\begin{equation}\label{eq:dth}
\frac{\rd}{\rd x}\bigg|_{x=0}\theta\big(e^{-x};q\big)=
(q;q)_\infty^2.
\end{equation}

After these preparations, we are ready for the proof of Theorem~\ref{main}.
For convenience, for a statement $A$ we use the notation
$$
{\bf 1}_A=\begin{cases}1&\text{if $A$ is true,}\\0&\text{otherwise.}\end{cases}
$$
\begin{proof}[Proof of Theorem \ref{main}]
Using Equation~\eqref{eq:dth}, Proposition~\ref{prop:jtpe}
and Lemma~\ref{lem:der}, we have
\begin{align*}
&(q;q)_{\infty}^2=\frac{\rd}{\rd x}\bigg|_{x=0}\theta \big(e^{-x};q\big)\\
  &=\frac{\big(q^{k^2};q^{k^2}\big)_{\infty}}
    {(q;q)_{\infty}}\frac{\rd}{\rd x}
    \bigg|_{x=0}  \sum_{\left\lceil\frac{1-k}{2}\right\rceil\le
    \ell\le \left\lceil\frac{k-1}{2}\right\rceil}
(-1)^{\ell}q^{\frac{\ell(\ell-1)}{2}}e^{-\ell x }
\theta\Big((-1)^{k-1}e^{-kx}q^{\frac{k(k-1+2\ell)}{2}};q^{k^2}\Big)\\
&=\frac{\big(q^{k^2};q^{k^2}\big)_{\infty}}{(q;q)_{\infty}}
\frac{\rd}{\rd x}\bigg|_{x=0}
\sum_{\frac{1-k}{2}<\ell\le\left\lceil\frac{k-1}{2}\right\rceil}
(-1)^{\ell}q^{\frac{\ell(\ell-1)}{2}}e^{-\ell x }
\theta\Big((-1)^{k-1}e^{-kx}q^{\frac{k(k-1+2\ell)}{2}};q^{k^2}\Big)\\*
  &\quad\;+{\bf 1}_{k\equiv 1\!\!\!\!\pmod 2}
    \frac{\big(q^{k^2};q^{k^2}\big)_{\infty}}
   {(q;q)_{\infty}}\frac{\rd}{\rd x}\bigg|_{x=0}(-1)^{\frac{1-k}{2}}
q^{\frac{k^2-1}{8}}e^{-\frac{1-k}{2} x }
\theta\Big((-1)^{k-1}e^{-kx};q^{k^2}\Big)\\
&=\frac{\big(q^{k^2};q^{k^2}\big)_{\infty}}{(q;q)_{\infty}}
  \sum_{\frac{1-k}{2}< \ell\le \left\lceil\frac{k-1}{2}\right\rceil}
   (-1)^{\ell}q^{\frac{\ell(\ell-1)}{2}}
\theta\Big((-1)^{k-1}q^{\frac{k(k-1+2\ell)}{2}};q^{k^2}\Big)\\
&\quad\;\times\Bigg(-\ell-k\sum_{n\ge 0}
\Bigg(\frac{(-1)^kq^{k^2(n+\frac{k-1+2\ell}{2k})}}
{1+(-1)^kq^{k^2(n+\frac{k-1+2\ell}{2k})}}-
\frac{(-1)^kq^{k^2(n+\frac{k+1-2\ell}{2k})}}
{1+(-1)^kq^{k^2(n+\frac{k+1-2\ell}{2k})}}\Bigg)\Bigg)\\*
  &\quad\;+{\bf 1}_{k\equiv 1\!\!\!\!\pmod 2}\,k
    \frac{\big(q^{k^2};q^{k^2}\big)_{\infty}}
    {(q;q)_{\infty}}(-1)^{\frac{1-k}{2}}q^{\frac{k^2-1}{8}}
    \big(q^{k^2};q^{k^2}\big)_\infty^2.
\end{align*}
Therefore,
\begin{align*}
&(q;q)_{\infty}^3-(-1)^{\frac{k-1}{2}}kq^{\frac{k^2-1}{8}}
\big(q^{k^2};q^{k^2}\big)_{\infty}^3\,{\bf 1}_{k\equiv 1\!\!\!\!\pmod 2}\\
&=\sum_{0\le \ell< \frac{k-1}{2}}(-1)^{\ell}q^{\frac{\ell(\ell+1)}{2}}
\Big((-1)^{k-1}q^{\frac{k(k-1-2\ell)}{2}},
(-1)^{k-1}q^{\frac{k(k+1+2\ell)}{2}},q^{k^2};q^{k^2}\Big)_{\infty}\\*
&\quad\;\times\Bigg(2\ell+1-2k\sum_{n\ge 0}\Bigg(\frac{(-1)^{k}
q^{k(kn+\frac{k-1-2\ell}{2})}}{1+(-1)^kq^{k(kn+\frac{k-1-2\ell}{2})}}-
\frac{(-1)^{k}q^{k(kn+\frac{k+1+2\ell}{2})}}
{1+(-1)^kq^{k(kn+\frac{k+1+2\ell}{2})}}\Bigg)\Bigg).
\end{align*}
This completes the proof.
\end{proof}

\appendix
\section{Further conjectures on precise sign patterns}
\label{sec:app}

Conjecture~\ref{conj1} is a statement about the
\textit{precise} (not asymptotic) sign pattern of a
$q$-series with base $q^3$.
We now present similar conjectures about precise sign patterns
for other bases $q^m$, where $m$ is a small positive integer
(we choose to list the cases $m\le 12$ here).
To keep the exposition short, we refrain from giving all the details
about justifying the specific ranges of the exponents $\delta$
for which the respective sign patterns appear to hold.
The analysis would be similar to that for Conjecture~\ref{conj1}.
We nevertheless give some hints about how the various irrational
constants emerge.

We have made similar observations for products involving other
bases ($q^m$ with selected $m>12$). While it should be possible to
approach the conjectures asymptotically by the methods
we used in this paper to treat the $m=3$ case,
we believe it to be a challenge to
prove the precise results (that depend on the respective
specified ranges), at least in the cases $m\ne 2,6$.

\subsection{Base $\boldsymbol q^{\boldsymbol 2}$}
The $q$-series coefficients of the infinite Borwein product
\begin{equation*}
G_2(q)=(q;q^2)_\infty,
\end{equation*}
evidently have the sign pattern $+-$.

We believe that even more is true:
\begin{conjecture}\label{conjm2}
The $q$-series coefficients of
$G_2(q)^\delta$ exhibit the sign pattern $+-$ for any $\delta\ge 1$.
\end{conjecture}
Since
$$
G_2(q)^\delta=1-\delta q+\frac{\delta(\delta-1)}2 q^2+O(q^3),
$$
it is clear that if $0<\delta<1$ the coefficient of $q^2$ would be
negative, in violation with the sign pattern $+-$. Further,
since $G_2(-q)=(-q;q^2)_\infty$, it is clear that Conjecture~\ref{conjm2}
is true for all positive integers $\delta$.
We speculate that a proof of Conjecture~\ref{conjm2}
for $\delta\in\rb^+\setminus\nb$
without using asymptotic machinery
is feasible (yet we have none available as for now).
A similar situation arises in the base $q^6$ case,
see Subappendix~\ref{ssec:6} below.

\subsection{Base $\boldsymbol q^{\boldsymbol 5}$}
The infinite product
\begin{equation*}
Q_5(q)=\frac{(q,q^4;q^5)_\infty}{(q^2,q^3;q^5)_\infty}
\end{equation*}
%(which does not fall into the category
%of what we coined ``infinite Borwein product'' but is closely related)
is the well-known product for the Rogers--Ramanujan continued fraction.
It was shown by Richmond and Szekeres~\cite{MR515217} that
the coefficients of $Q_5(q)$ have the sign pattern $+-+--$.

We conjecture that more is true:
\begin{conjecture}\label{conj5}
The $q$-series coefficients of
$Q_5(q)^\delta$ exhibit the sign pattern $+-+--$ for
$$1\le \delta\le \frac{\sqrt{97}-5}2\approx
2.424428900898\ldots.$$
For
$$2.571366313289\ldots\approx \alpha\le \delta\le 4,$$
they exhibit the sign pattern $+-+-+$.
(Here $\alpha$ is the unique real root of the
polynomial
$$x^7+35x^6+7x^5-6055 x^4-14336 x^3+104300 x^2-184752 x+282240$$
that satisfies $2<\alpha<3$.)
For $\delta=-1$ they exhibit the sign pattern $++---$, and for
$-3\le \delta\le -2$ the sign pattern $+++--$.
\end{conjecture}
The constant $\frac{\sqrt{97}-5}2$ comes from the coefficient
of $q^{4}$ in $Q_5(q)^\delta$, which contains $\delta^2+5\delta-18$
as a factor (of which $\frac{\sqrt{97}-5}2$ is a root).
The specific constant $\alpha$ comes from the coefficient
of $q^{9}$ in $Q_5(q)^\delta$. For $\delta=\alpha$, this coefficient
vanishes and changes its sign locally as $\delta$ traverses that point.

\subsection{Base $\boldsymbol q^{\boldsymbol 6}$}\label{ssec:6}
Here we consider the infinite product
\begin{equation*}
Q_6(q)=(q,q^5;q^6)_\infty.
\end{equation*}

\begin{conjecture}\label{conjd6}
  The $q$-series coefficients of
 $Q_6(q)^\delta$ exhibit the alternating sign pattern $(+-)^3$ for all $\delta\ge 3$.
\end{conjecture}
Since $Q_6(-q)=(-q,-q^5;q^6)_\infty$, it is clear that Conjecture~\ref{conjd6}
is true for all positive integers $\delta$. 

\subsection{Base $\boldsymbol q^{\boldsymbol 7}$}
Here we consider the infinite Borwein product
\begin{equation*}
G_7(q)=\frac{(q;q)_\infty}{(q^7;q^7)_\infty}.
\end{equation*}

\begin{conjecture}
  The $q$-series coefficients of
  $G_7(q)^\delta$ exhibit the sign pattern $+--\,0\,0+0$ for $\delta=1$.
  (The zeroes indicate vanishing.)
For $2\le \delta< 3$ they exhibit the sign pattern $+--+++-$,
for $\delta=3$ the sign pattern $+-0+0\,0\,-$, and for $3<\delta\le 5$
the sign pattern $+-++---$.
\end{conjecture}
We would like to emphasize that the periodicity of
the signs of the Borwein coefficients (for any prime $p$)
was already proved by Andrews \cite{MR1395410},
corresponding to the case $\delta=1$ in the conjecture.
We further notice that we already proved the vanishing
of the respective coefficients in the case $\delta=3$
in Corollary~\ref{cor13}.

\subsection{Base $\boldsymbol q^{\boldsymbol 8}$}
The infinite  product
\begin{equation*}
Q_8(q)=\frac{(q,q^7;q^8)_\infty}{(q^3,q^5;q^8)_\infty}
\end{equation*}
%(which again is not an ``infinite Borwein product'' in our terminology)
is the well-known product for the G\"ollnitz--Gordon
continued fraction. It was shown by
Hirschhorn~\cite{Hh2001} that the coefficients of
$Q_8(q)$ exhibit the sign pattern $+-0+-+0-$,
and that  the coefficients of $Q_8(q)^{-1}$ exhibit
the sign pattern $+++0---0$.

We conjecture that even more is true:
\begin{conjecture}
The $q$-series coefficients of
$Q_8(q)^\delta$ exhibit for $\delta=2$ the length $16$ sign pattern
$+-++-+--+--+-++-$. For
$$2.664479110226972\ldots\approx \beta\le \delta\le 4$$
they exhibit the sign pattern $+-++-+--$.
(Here $\beta$ is the unique real root of the
polynomial
\begin{align*}
&x^{12}-90x^{11}+ 1457 x^{10} + 30486 x^9 - 537081 x^8\\*
  &+ 1892346 x^7-{} 3683653 x^6  - 837509646 x^5 + 774767020 x^4\\*
  & +{} 3333687384 x^3- 40887173664 x^2 + 94379731200 x+49816166400
\end{align*}
that satisfies $2<\beta<3$.)
For
$$-1< \delta\le \frac{7-\sqrt{73}}2\approx -0.77200187265877\ldots$$
they exhibit the sign pattern $+++----+$. For $\delta=-2$
they exhibit the length 16 sign pattern $+++++----+++----$.
\end{conjecture}
The constant $\frac{7-\sqrt{73}}2$ comes from the coefficient
of $q^{4}$ in $Q_8(q)^\delta$, which contains $\delta^2-7\delta-6$
as a factor (of which $\frac{7-\sqrt{73}}2$ is a root).
The specific constant $\beta$ comes from the coefficient
of $q^{14}$ in $Q_8(q)^\delta$. For $\delta=\beta$, this coefficient
vanishes and changes its sign locally as $\delta$ traverses that point.

\subsection{Base $\boldsymbol q^{\boldsymbol 1\boldsymbol 0}$}
Here we consider the infinite product
\begin{equation*}
Q_{10}(q)=\frac{(q,q^9;q^{10})_\infty}{(q^3,q^7;q^{10})_\infty}.
\end{equation*}
\begin{conjecture}
  The $q$-series coefficients of
  $Q_{10}(q)^\delta$ exhibit for $\delta=1$ the sign pattern
$+-++--+--+$, and for $\delta=-1$ the sign pattern $++++-----+$.
\end{conjecture}

\subsection{Base $\boldsymbol q^{\boldsymbol 1\boldsymbol 1}$}
Here we consider the infinite Borwein product
\begin{equation*}
  G_{11}(q)=\frac{(q;q)_\infty}{(q^{11};q^{11})_\infty}.
\end{equation*}
\begin{conjecture}
  The $q$-series coefficients of
  $G_{11}(q)^\delta$ exhibit for $\delta=1$ the sign pattern
  $+--\,0-+\,0+0\,0\,0$, for
  $$1.7584535519419\ldots\approx\gamma\le \delta\le 2$$
  they exhibit the sign pattern $+--+++-+--+$.
  (Here $\gamma$ is the unique real root of the
polynomial
\begin{align*}
&x^{18}-605 x^{17}+ 157086 x^{16} - 23170380 x^{15} + 2166947862 x^{14}\\*
& - 135855285510 x^{13}  + 5889093658432 x^{12} - 179555226371060 x^{11}\\*
&+ 3882606726301473 x^{10} - 59646447279831765 x^9 + 648313198119620778 x^8\\*
&- 4932359196939174840 x^7  +  25753067609579704864 x^6\\*
&- 89277087875773607120 x^5 +  194830259522753020704 x^4\\*
  &- 246159139789631646720 x^3 + 159155369289255052800 x^2\\*
  &- 42300952112982528000 x +3243869344235520000
\end{align*}
that satisfies $1.5<\gamma<2$.)
For $\delta=3$ the coefficients exhibit the sign pattern $+-0+-\,0-0\,0\,0\,+$.
(Again, zeroes indicate that the respective coefficients vanish.)
\end{conjecture}
Notice that we already proved the vanishing of the respective coefficients
in the case $\delta=3$ in Corollary~\ref{cor13}.

The specific constant $\gamma$ comes from the coefficient
of $q^{21}$ in $G_{11}(q)^\delta$. For $\delta=\gamma$, this coefficient
vanishes and changes its sign locally as $\delta$ traverses that point.

\subsection{Base $\boldsymbol q^{\boldsymbol 1\boldsymbol 2}$}
Here we consider the infinite product
\begin{equation*}
Q_{12}(q)=\frac{(q,q^{11};q^{12})_\infty}{(q^5,q^7;q^{12})_\infty}.
\end{equation*}
\begin{conjecture}
  The $q$-series coefficients of
  $Q_{12}(q)^\delta$ exhibit for $\delta=1$ the sign pattern
  $+-+\,0-+-+-0+-$, for $2\le \delta\le 3$ they exhibit the sign pattern
  $+-+--+-+-++-$.
For $\delta=-1$ they exhibit the sign pattern $+++++\,0-----0$,
and for $-1<\delta<0$ the sign pattern $+++++------\,+$.
\end{conjecture}

 %\bibliographystyle{unsrt}
%\bibliography{test20191019}

\end{document}